\numberwithin{equation}{section}
\theoremstyle{plain}
\newtheorem{theorem}{Theorem}[section]
\newtheorem{lemma}[theorem]{Lemma}
\newtheorem{proposition}[theorem]{Proposition}
\theoremstyle{definition}
\theoremstyle{remark}
\title{A bilinear approach to the finite field restriction problem}
\author{Mark Lewko}
\address{Lebanon, New Hampshire USA}
\email{mlewko@gmail.com}
\begin{document}

\begin{abstract}
Let $P$ denote the $3$-dimensional paraboloid over a finite field of odd characteristic in which $-1$ is not a square. We show that the Fourier extension operator associated with $P$ maps $L^2$ to $L^{r}$ for $r > \frac{32}{9} \approx3.555$. In contrast with much of the recent progress on this problem, our argument does not use state-of-the-art incidence estimates but rather proceeds by obtaining estimates on a related bilinear operator. These estimates are based on a geometric result that, roughly speaking, states that a set of points in the finite plane $F^2$ can be decomposed as a union of sets each of which either contains a controlled number of rectangles or a controlled number of trapezoids. 

\textbf{Erratum.} The published version of this manuscript claimed the stronger range $r>\frac{24}{7}$. The argument up to the final exponent calculation remains valid, but the claimed range was misstated due to a computation error.
\end{abstract}

\maketitle

\section*{Erratum}
In the published version of this paper, Theorem \ref{thm:Main} was stated with the exponent range $r>\frac{24}{7}$. This range was misstated due to an error in the final dominance calculation in Section 6. The manuscript has been revised to reflect the corrected argument. The bilinear estimate and the geometric estimate, Proposition \ref{prop:TboundGF}, are unchanged; what changes is the final bookkeeping of the dominant terms.

The error occurs after Section 6 obtains
\[
\|\widehat g\|_{L^2(P,d\sigma)}
\lesssim |G|^{\frac12}
+ |G|^{\frac{11}{16}}|F|^{-\frac18}
+ |F|^{-\frac{3}{16}}|G|^{\frac{13}{16}}
+ |G|^{\frac{17}{24}}
+ |F|^{\frac18}|G|^{\frac58}.
\]
In the published version, the term $|F|^{-\frac{3}{16}}|G|^{\frac{13}{16}}$ was treated as dominated by $|G|^{\frac{17}{24}}$ throughout the relevant middle range. In fact, this domination holds only for $|G|\leq |F|^{9/5}$. The range $|F|^{9/5}<|G|<|F|^{9/4}$ therefore requires a different comparison of the available estimates, with the worst case occurring at $|G|\sim |F|^2$. This gives the corrected dual exponent $32/23$, and hence the corrected extension range $r>\frac{32}{9}$ rather than $r>\frac{24}{7}$.

This corrected result still gives an improvement over the Mockenhaupt--Tao exponent $r>\frac{18}{5}$ in arbitrary finite fields in which $-1$ is not a square, and it does so without invoking the incidence machinery used in the prime-field literature. It is, however, slightly weaker than the best currently available prime-field estimates obtained using incidence theory, $r>\frac{188}{53}$ from \cite{LewR}. 

Finally, the correction changes which term is the true obstruction. In the original version, a remark observed that the $m^{7/2}$ term in Proposition \ref{prop:TboundGF} could be improved in certain ranges, but stated that such an improvement would not affect the main theorem. That conclusion depended on the erroneous dominance calculation above. After the correction, the contribution arising from this $m^{7/2}$ term is precisely the dominant middle-range obstruction. Improvements of this term, obtained by combining the present bilinear framework with incidence estimates, will be addressed in forthcoming work.

\section{Introduction}
The Fourier restriction problem is a central problem in Euclidean harmonic analysis. The study of this phenomenon in the finite field setting was initiated by Mockenhaupt and Tao \cite{MT}, and is the subject of a large number of papers. For instance, see \cite{IKL,Le, LewR,LewK, LewN,LL,MT,RS,SD}. We refer the reader to the papers  \cite{LewK} and \cite{MT} for an overview of the problem, connections to other areas of math, and a summary of conjectures and partial progress. 

Let $F$ denote a finite field of odd characteristic, and let $F^3$ denote the three dimensional vector space over $F$. The three dimensional paraboloid is the surface $P:=\{(\underline{x},\underline{x}\cdot \underline{x}) : \underline{x} \in F^{2}\}$. We endow $P$ with the normalized counting measure $d\sigma$ which assigns $|P|^{-1}=|F|^{-2}$ to each point on $P$. For a complex-valued function on $P$ (which we naturally identify with $F^{2}$), we define the $L^q$ norm with respect to $d\sigma$ as $||f||_{L^q(P,d\sigma)} := \left(|P|^{-1} \sum_{\xi \in P} |f(\xi)|^q \right)^{1/q} $ while for a function $g: F^3 \rightarrow \mathbb{C}$ we define $||g||_{L^r(F^d)} : = \left( \sum_{x \in F^d} |g(x)|^{r} \right)^{1/r}$. For a function $f:P \rightarrow \mathbb{C}$ we define the Fourier extension operator
$$(fd\sigma)^{\vee}(x) := |F|^{-2} \sum_{\xi \in P} f(\xi) e(x\cdot \xi). $$
We will be interested in proving inequalities of the form 
\begin{equation}\label{eq:mRest} ||(fd\sigma)^{\vee}||_{L^r(F^3)} \lesssim ||f||_{L^2(P,d\sigma)}
\end{equation}
which hold for all function $f : P \rightarrow \mathbb{C}$, with the implied constant independent of $|F|$. From H\"older's inequality, lower values of $r$ imply higher values of $r$. The Stein-Tomas method establishes the inequality for $r \geq 4$ which is the optimal $L^2$ result for arbitrary finite fields. For finite fields in which $-1$ is not a square, Mockenhaupt and Tao \cite{MT} improved this range to $r > \frac{18}{5} =3.6$ and conjectured that the inequality holds for $r\geq 3$. Mockenhaupt and Tao's result was improved to $r <  \frac{18}{5} - \delta$ for some unspecified small $\delta$ by the author \cite{LewN} in arbitrary finite fields of odd order and to $r \geq \frac{745}{207}\approx 3.599$ in prime order fields. This was subsequently improved by Stevens and de Zeeuw \cite{SD} to $ r > \frac{68}{19}\approx 3.579$,  Rudnev and Shkredov \cite{RS} to $ r \geq \frac{32}{9}\approx 3.555$, and the author \cite{LewR} to $r > \frac{188}{53} \approx 3.547$. All of these improvements were obtained only in prime order fields and proceed by improving the same combinatorial input in the Mockenhaupt-Tao argument. In doing so, all of these use, indirectly, Rudnev's point-plane incidence theorem \cite{R}, which in turn relies on a highly non-trivial algebraic geometry result Koll\'ar \cite{Kol}. Rudnev and Shkredov remark in \cite{RS} that there are significant challenges with obtaining an analogous result in arbitrary finite fields. See also Rudnev's survey \cite{Rsur}.

In the current paper, we prove the following: 

\begin{theorem}\label{thm:Main}Let $F$ be a finite field of odd characteristic in which $-1$ is not a square. Then 
$$ ||(fd\sigma)^{\vee}||_{L^r(F^3)} \lesssim ||f||_{L^2(P,d\sigma)}$$
for all $r > \frac{32}{9} \approx 3.555$.
\end{theorem}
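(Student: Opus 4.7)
My plan is to deduce Theorem~\ref{thm:Main} from a non-trivial bilinear extension estimate for pairs of functions supported on well-separated pieces of $P$, combined with the standard bilinear-to-linear reduction adapted to the finite field setting (as used in Mockenhaupt--Tao and subsequent work). The central object is the bilinear $L^2$ norm $\|(f_1 d\sigma)^\vee (f_2 d\sigma)^\vee\|_{L^2(F^3)}$ for $f_i$ supported on pieces $E_i \subset P$; Plancherel converts its square to a weighted count of quadruples $(\xi_1, \xi_2, \eta_1, \eta_2) \in E_1^2 \times E_2^2$ satisfying $\xi_1 + \eta_1 = \xi_2 + \eta_2$ on $P$.

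Identifying $P$ with $F^2$ via the projection $(x, x\cdot x) \mapsto x$ and writing $\xi_i = (a_i, |a_i|^2)$, $\eta_i = (b_i, |b_i|^2)$, this additive relation unpacks into the parallelogram identity $a_1 + b_1 = a_2 + b_2$ together with the quadratic constraint $|a_1|^2 + |b_1|^2 = |a_2|^2 + |b_2|^2$. Substituting the first into the second yields the orthogonality $(a_1 - a_2) \cdot (a_1 - b_2) = 0$, and since $-1$ is not a square the form $x \cdot y$ is anisotropic, so this is a genuine right-angle condition. The four points therefore form a rectangle in $F^2$ with two adjacent vertices $a_1, a_2$ in $E_1$ and the other two $b_1, b_2$ in $E_2$, and the bilinear $L^2$ estimate reduces to bounding the number of such ``bichromatic rectangles''.

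To bound this count I would invoke the geometric decomposition announced in the abstract: partition each of $E_1, E_2$ into pieces that are either rectangle-poor (in which case the rectangle count is controlled directly) or trapezoid-poor (in which case an auxiliary transference inequality converts a trapezoid bound into a rectangle bound, modulo an acceptable loss). Pigeonholing over the decomposition gives a bilinear $L^2$ estimate, and interpolating it against a trivial bilinear bound coming from the Stein--Tomas $L^4$ estimate yields a bilinear $L^p$ bound for some $p < 2$; the bilinear-to-linear reduction then delivers the $L^r$ estimate for $r > 24/7$.

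The main obstacle is the geometric decomposition itself. Two calibrations must succeed simultaneously: the notion of ``trapezoid'' has to be chosen so that there is a useful quantitative transference between trapezoid counts and rectangle counts, and the trade-off produced by the decomposition procedure has to be sharp enough that, after interpolation, the exponent $24/7$ emerges. Because Rudnev's point--plane incidence theorem is unavailable for arbitrary odd-characteristic fields with $-1$ non-square, this must be achieved by an elementary but delicate argument, presumably exploiting the reflection and translation symmetries of rectangles in $F^2$ to substitute for the algebraic-geometric input used in previous work.
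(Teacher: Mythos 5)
Your high-level picture --- pass to a bilinear estimate, decompose into pieces controlled by either a rectangle count or a trapezoid count, and run an elementary geometric argument to substitute for the incidence theory --- matches the spirit of the paper. But the concrete mechanism you describe is different from the paper's, and as stated it has a gap that would prevent it from closing.

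The paper's bilinear structure is \emph{not} a splitting of $f$ on the paraboloid $P$. It works on the dual side: after reducing to $\|\widehat{g}\|_{L^2(P,d\sigma)} \lesssim \|g\|_{L^{r'}}$ and writing $\|\widehat{g}\|_{L^2(P,d\sigma)}^2 = \langle g, g*(d\sigma)^\vee\rangle$, it decomposes $g = \sum_z g_z$ into vertical slices in physical space $F^3$ and expands $\|g*K\|_{L^4}^2 = \big\| \sum_{z,z'} (g_z * K)(g_{z'}*K)\big\|_{L^2}$. The diagonal terms $z=z'$ are governed by $\|g_z*K\|_{L^4}^4 \sim |F|^{-1}\mathcal{R}(G_z)$ --- this is where rectangles live, exactly as in the Mockenhaupt--Tao machine. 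The genuinely new ingredient is Proposition~\ref{prop:bilinear}: a direct stationary-phase computation with the explicit kernel $K$ shows that the off-diagonal terms $\|(g_z*K)(g_{z'}*K)\|_{L^2}^2$ for $z\ne z'$ are governed by a \emph{trapezoid} count $\mathcal{T}(G_z, G_{z'})$. Proposition~\ref{prop:TboundGF} then bounds $\min\{|F|(\mathcal{R}(A)\mathcal{R}(B))^{1/2},\, \mathcal{T}(A,B)\}$ by decomposing into $k$-regular pieces (unions of $\sim k$ lines carrying $\sim |A|/k$ points each): small $k$ makes the rectangle count small (Lemma~\ref{lem:Rest}), large $k$ makes the trapezoid count small (Lemma~\ref{lem:Test}).

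Your proposal sets up the bilinear estimate on the wrong side, and as a result has no place for trapezoids to enter. Splitting $f = f_1 + f_2$ on $P$ and Plancherelizing $\|(f_1d\sigma)^\vee(f_2d\sigma)^\vee\|_{L^2}^2$ does give a bichromatic rectangle count --- your computation of the orthogonality constraint is correct --- but this is merely a bichromatic refinement of the same $L^4$ rectangle count that already underlies the Mockenhaupt--Tao $18/5$ bound; there is no independent second quantity against which to play a decomposition. You acknowledge this by invoking an ``auxiliary transference inequality'' converting trapezoid bounds to rectangle bounds, but no such transference exists: every rectangle is already a trapezoid, and for a generic set $\mathcal{T}(A) \sim |A|^4$ while $\mathcal{R}(A) \lesssim |A|^{5/2}$, so a trapezoid bound is far weaker and cannot be transferred into a rectangle bound. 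The paper never converts one into the other; the two counts appear as genuinely separate terms in the bilinear expansion of $\|g*K\|_{L^4}^2$ (diagonal versus off-diagonal), and the gain comes from the geometric observation that a set cannot simultaneously be near-extremal for both. Without the vertical slicing in $F^3$ to produce the trapezoid count in the first place, the decomposition you envision has nothing to act on, and the exponent $24/7$ will not emerge.
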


The proof proceeds by obtaining estimates for the $L^2$ norm of a related bilinear operator. In obtaining these estimates, one is lead to a new geo-combinatorial problem. Very loosely, this problem requires one to decompose a subset of $F^2$ into sub-pieces each of which either (i) do not contain a near maximal number of rectangles, or (ii) do not contain a near maximal number of trapezoids. This we are able to do with some fairly elementary arguments. See Proposition \ref{prop:TboundGF} for a precise statement of this result. Notably we avoid the use of the deep incidence estimates used in all of the prior arguments that extend below the Mockenhaupt-Tao exponent. 

Finally, we remark on the requirement in Theorem \ref{thm:Main} that $-1$ is not a square in $F$. If $-1$ is a square in $F$ then the paraboloid will contain lines. In this case the surface can be re-parameterized as a hyperbolic paraboloid $\{(x,y,xy) | (x,y) \in F^2 \}$. Testing the inequality with $f$ taken to be the characteristic function of one of these lines, one sees that the conclusion of Theorem \ref{thm:Main} can only hold in the weaker Stein-Tomas range of $r \geq 4$. This is somewhat analogous to the numerology of the three-dimensional Euclidean restriction problem (for the paraboloid or sphere). Roughly speaking, lines (or more generally affine subspaces) in the finite field setting play a role similar to `caps' in the Euclidean setting. It is conjectured that one can obtain estimates for the same range of $r$ if one replaces the $||\cdot||_{L^2(P,d\sigma)}$ norm on the right-hand side by a larger $L^q$ norm. In particular, it is conjectured that 
$$ ||(fd\sigma)^{\vee}||_{L^r(F^3)} \lesssim ||f||_{L^q(P,d\sigma)}$$
holds for $r\geq 3$ and $q \geq \frac{r}{r-2}$, with the strongest / end-point conjecture being that the inequality holds with $r=3$ with $q=3$. The author has studied this setting in \cite{LewK} and has verified the conjecture holds at, and slightly beyond, the Mockenhaupt-Tao exponent of $r \geq \frac{18}{5} -\delta$ for some small $\delta > 0$. This requires combining the Mockenhaupt-Tao machinery with substantial additional arguments to account for the potential Kakeya-like configurations of affine spaces that hypothetically could be encoded by $f$. It seems likely that the argument presented here can be combined with those arguments to make further progress in that case. We will not explore that here, however.

\section{Acknowledgment}
We would like to thank the anonymous referee for a careful reading of the manuscript.

\section{Background and Notation}
We give a brief overview of the notation and standard tools established in prior work on the problem. We refer the reader to \cite{IKL, LewK, MT} for a more detailed exposition. We will use $\mathbb{C}$ to denote the complex numbers and, unless otherwise stated, $F$ will denote a finite field of odd characteristic in which $-1$ is not a square, which we will endow with the counting measure $dx$. Given a function $f : F^3 \to \mathbb C$ we will often consider levels where the value of the function is proportional in size to a given value. In particular we write $f(x) \sim \lambda$ to indicate that $\lambda/2 \leq |f(x)| \leq 2\lambda$. Given $x \in F^3$ we will let $\underline{x}$ denote the first $2$ coordinates of $x$ and $x_3$ the $3$-th coordinate or, in other words, $x=(\underline{x},x_3)$. This allows us to parameterize the $3$-dimensional paraboloid as $P:=\{(\underline{x},\underline{x}\cdot \underline{x}) : \underline{x} \in F^{2}\}$.
By duality an estimate of the form \eqref{eq:mRest} is equivalent to the inequality 
\begin{equation}\label{eq:dual}
|| \widehat{g} ||_{L^{2}(P,d\sigma)} \lesssim ||g||_{L^{r'}(F^3)}
\end{equation}
holding for all functions $g:F^3 \rightarrow \mathbb{C}$ where $r'=\frac{r}{r-1}$ is the conjugate exponent. In this setting we have

\begin{equation}\label{eq:Convolution}||\widehat{g}||_{L^2(P,d\sigma)} = \left| \left<g, g*(d\sigma)^{\vee}\right>_{F^{3}} \right|^{\frac{1}{2}}\end{equation}
where $(d\sigma)^{\vee}$ denotes the inverse Fourier transform of the surface measure of P which an easy computation (see (18) in \cite{MT}) gives as
\begin{equation}\label{eq:evaulation}(d\sigma)^{\vee}(x) = |F|^{-2} \sum_{\xi \in P} e(x\cdot \xi) 
= \left\{\begin{array}{ll} \sigma_F |F|^{-1}  e\left(\frac{\underline{x}\cdot\underline{x}}{-4x_3}\right)\quad &\mbox{if} \quad x_3\ne 0\\

 0 \quad &\mbox{if} \quad x_3=0,~x\ne 0 \\
                1 \quad &\mbox{if} \quad x=0,\end{array}\right. 
\end{equation}
where $\sigma_F$ is a complex number such that $|\sigma_F|=1$ and $\chi(\cdot)$ denotes the multiplicative quadratic character. It follows that $|(d\sigma)^{\vee}(x)| \leq |F|^{-1}$ for $x\ne 0.$ For notational convenience we define $K(x):F^{3} \rightarrow \mathbb{C}$ by $(d\sigma)^{\vee} = \delta(x) + K(x)$
where $\delta(x)$ denotes the Dirac delta function. With this notation, one has 
\begin{equation}\label{eq:decayL1}||\widehat{g}||_{L^2(P,d\sigma)} = \left| \left<g, g*(d\sigma)^{\vee}\right> \right|^{\frac{1}{2}} \leq ||g||_{L^2(F^3)} + |F|^{-\frac{1}{2}}||g||_{L^1(F^3)}. \end{equation}

Interpolating this estimate with the following consequence of Parseval 
\begin{equation}\label{eq:bigL2}
\|\widehat{g}\|_{L^2(P, d\sigma)} \le |F|^{\frac{1}{2}} ||g||_{L^{2}(F^3)}
 \end{equation} yields the Stein-Tomas estimate
$$ ||\widehat{g}||_{L^{2}(P,d\sigma)} \lesssim ||g||_{L^{\frac{4}{3}}(F^3)} .$$
We refer the reader to \cite{MT} for additional detail. Another well-known consequence of \eqref{eq:decayL1} is the following epsilon removal lemma (see \cite{LewK} Lemma 16) which states:
\begin{lemma}\label{lem:epsRem}($\epsilon$-removal) Assume that for every $\epsilon>0$ one has a constant $c(\epsilon)$ such that the following inequality holds
$$ \|\widehat{g}\|_{L^2(P, d\sigma)} \lesssim c(\epsilon) |F|^{\epsilon} \|g\|_{L^{r_{0}}(F^3)}$$
then for every $r>r_0$, we have the estimate
$$ \|\widehat{g}\|_{L^2(P, d\sigma)} \lesssim_r \|g\|_{L^{r}(F^3)}.$$
\end{lemma}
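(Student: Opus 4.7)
The plan is to reduce to characteristic functions and perform a case analysis on the size of the support. Given $g = \chi_E$ with $|E| = N$, I would compare two bounds on $\|\widehat{\chi_E}\|_{L^2(P, d\sigma)}$: the trivial estimate $\sqrt{N + N^2/|F|}$ obtained from \eqref{eq:decayL1}, and the $|F|^\epsilon$-loss estimate $c(\epsilon) |F|^\epsilon N^{1/r_0}$ obtained by applying the hypothesis to $\chi_E$. A short computation shows that the first is $\lesssim N^{1/r}$ whenever $N \leq |F|^{r/(2(r-1))}$, while the second is $\leq N^{1/r}$ whenever $N \geq (c(\epsilon) |F|^\epsilon)^{r r_0/(r_0 - r)}$. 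Choosing $\epsilon = \epsilon(r, r_0)$ strictly less than $(r_0 - r)/(2 r_0 (r-1))$ ensures that the latter threshold lies below the former, so the two regimes cover all $N \in [1, |F|^3]$, yielding
$$\|\widehat{\chi_E}\|_{L^2(P, d\sigma)} \lesssim N^{1/r}$$
with implied constant independent of $|F|$.

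To upgrade from characteristic functions to general $g$, I would apply real interpolation. The trivial bound \eqref{eq:decayL1}, combined with the pointwise inequality $\|g\|_{L^2(F^3)} \leq \|g\|_{L^1(F^3)}$ (valid for the unnormalized counting measure), yields the strong type $(1, 2)$ bound $\|\widehat{g}\|_{L^2(P, d\sigma)} \lesssim \|g\|_{L^1(F^3)}$ with an absolute constant. Given any $r < r_0$, picking an intermediate exponent $r' \in (r, r_0)$ and applying a Marcinkiewicz-type real interpolation between this strong type $(1, 2)$ bound and the restricted strong type $(r', 2)$ bound from the previous step produces the desired strong type $(r, 2)$ estimate without logarithmic loss, since $r$ lies strictly between the interpolation endpoints.

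The main obstacle I expect is the compatibility condition on $\epsilon$: one needs $\epsilon$ small enough that the hypothesis threshold is dominated by the trivial threshold, yet the constant $c(\epsilon)$ must remain a bounded (and, crucially, $|F|$-independent) constant. This is made possible by the ``for every $\epsilon > 0$'' clause in the hypothesis, which lets us fix $\epsilon = \epsilon(r, r_0)$ once and for all, so that $c(\epsilon)$ is absorbed into the implied constant. This choice is also the reason the conclusion holds only for $r$ strictly less than $r_0$ rather than at the endpoint.
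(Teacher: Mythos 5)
Your proposal is correct in its essential structure. The paper itself does not prove this lemma---it cites Lemma 16 of \cite{LewK}---but your argument (restrict to characteristic functions, use the trivial bound from \eqref{eq:decayL1} for small supports and the $|F|^\epsilon$-loss hypothesis for large supports, choose $\epsilon$ small enough so the two regimes overlap, then upgrade to general functions by interpolation) is the standard route to such epsilon-removal results in the finite-field setting, and the two thresholds on $N=|E|$, the choice $\epsilon < (r_0-r)/(2r_0(r-1))$, and the absorption of small $|F|$ into the implied constant all check out.

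One point worth flagging in the final step: the classical Hunt/Stein--Weiss form of the Marcinkiewicz interpolation theorem, which upgrades restricted weak type at two endpoints to strong type in the interior, requires the target exponents $q_0, q_1$ to differ; here both targets are $L^2$, so that form does not apply directly. What saves your argument is that you have genuine strong type $(1,2)$ (not merely restricted type) at one endpoint, so one can invoke real interpolation of Lorentz spaces instead: the restriction operator is bounded $L^{1,1}\to L^2$ and bounded $L^{r',1}\to L^2$ (the latter being equivalent to the restricted strong type $(r',2)$ bound you establish), and since the source exponents $1\ne r'$ differ one has $(L^{1,1},L^{r',1})_{\theta,s}=L^{r,s}$ with $1/r=(1-\theta)+\theta/r'$, while $(L^2,L^2)_{\theta,s}=L^2$ for any $\theta\in(0,1)$; taking $s=r$ gives the desired strong type $(r,2)$ bound. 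So the idea is right, and I am only pointing out that the correct citation is Lorentz-space real interpolation rather than the off-diagonal Marcinkiewicz theorem per se.
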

We also recall that dyadic pigeonholing allows us to restrict attention to functions $g$ that are approximately constant on a set when proving inequalities of the form \eqref{eq:dual}. In particular:
\begin{lemma}\label{lem:dyPig}(Dyadic Pigeonholing) If one proves the inequality 
$$ ||\widehat{g}||_{L^2(P, d\sigma)} \lesssim c(\epsilon) |F|^{\epsilon} \|g\|_{L^{r_{0}}(F^3)}$$
for all $g: F^3 \rightarrow \mathbb{C}$ of the form $g(x) \sim 1$ on its support $G \subseteq F^3$, then for every $r>r_0$, the inequality
$$ \|\widehat{g}\|_{L^2(P, d\sigma)} \lesssim \|g\|_{L^{r}(F^3)}$$
holds for arbitrary $g:F^3 \rightarrow \mathbb{C}$.
\end{lemma}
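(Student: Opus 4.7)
The plan is to reduce an arbitrary $g$ to a logarithmic sum of ``approximately constant'' pieces, apply the hypothesis to each piece, and then invoke the $\epsilon$-removal lemma (Lemma \ref{lem:epsRem}) to absorb the resulting $(\log|F|)$ factor.

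First I would normalize so that $\|g\|_{L^{r_0}(F^3)}=1$, which in particular forces $\|g\|_\infty \le 1$. I would split $g = g_{\text{neg}} + \sum_k g_k$, where $g_k$ is the restriction of $g$ to the level set $\{x\in F^3 : |g(x)|\sim 2^{-k}\}$ and $g_{\text{neg}}$ collects the points where $|g(x)| \le |F|^{-N}$ for a large constant $N$ to be chosen. The $L^\infty$ bound shows that only $O(\log|F|)$ dyadic scales contribute, while \eqref{eq:bigL2} bounds the contribution of $g_{\text{neg}}$ by $|F|^{1/2}\|g_{\text{neg}}\|_{L^2} \lesssim |F|^{2-N}$, which for $N$ large is utterly negligible compared with any $|F|^\epsilon$ factor.

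Next, for each dyadic scale $k$, I would rescale by writing $g_k = 2^{-k} h_k$ with $h_k(x)\sim 1$ on its support $S_k\subseteq F^3$. The hypothesis applied to $h_k$ gives
$$\|\widehat{h_k}\|_{L^2(P,d\sigma)} \lesssim c(\epsilon)|F|^\epsilon\|h_k\|_{L^{r_0}(F^3)},$$
and multiplying through by $2^{-k}$ yields the same bound for $g_k$ in place of $h_k$. Summing over the $O(\log|F|)$ relevant scales via the triangle inequality, and noting that disjointness of the level sets gives $\|g_k\|_{L^{r_0}} \le \|g\|_{L^{r_0}}$, I obtain
$$\|\widehat{g}\|_{L^2(P,d\sigma)} \lesssim (\log|F|)\,c(\epsilon)|F|^\epsilon\|g\|_{L^{r_0}(F^3)} + 1.$$
Since $\log|F| \lesssim_{\epsilon'} |F|^{\epsilon'}$ for any $\epsilon'>0$, by adjusting $\epsilon$ slightly the logarithmic loss can be absorbed into a new factor of the form $c'(\epsilon)|F|^\epsilon$, so the hypothesis of Lemma \ref{lem:epsRem} is satisfied for $g$.

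Finally, invoking Lemma \ref{lem:epsRem} directly upgrades this $|F|^\epsilon$-lossy inequality at the exponent $r_0$ to a clean inequality at every exponent in the allowed range, which is exactly the conclusion claimed. The only mildly delicate point is the treatment of the truncated piece $g_{\text{neg}}$: one needs to choose $N$ large enough that the trivial Parseval-type bound \eqref{eq:bigL2} handles it, which is essentially automatic. Beyond that, the argument is a routine dyadic decomposition and offers no genuine obstacle.
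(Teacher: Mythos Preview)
Your proposal is correct and follows essentially the same route as the paper: normalize, split into $O(\log|F|)$ dyadic level sets plus a negligible tail, rescale each piece so the hypothesis applies, sum via the triangle inequality incurring a $\log|F|$ loss, and then invoke Lemma~\ref{lem:epsRem}. The only cosmetic differences are that the paper pigeonholes to the worst scale rather than summing, and disposes of the tiny piece with a one-word ``trivially'' instead of citing \eqref{eq:bigL2}; neither changes the substance.
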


\begin{proof}
We perform a dyadic pigeonholing to reduce to functions that are approximately characteristic functions of sets. Normalize $g$ such that $||g||_{L^{r_{0}}} =1$, and write $g= \sum_{i=0}^{10 \log |F|} g 1_{G_i} + g 1_{G_{\text{small}}}$ where $G_i :=\{x\in F^{3}: f(x) \sim 2^{i} \}$ and $G_{\text{small}}:=\{x \in F^{3}: g(x) \leq |F|^{-10} \}$.  Trivially, we have  $ || \widehat{g 1_{G_{\text{small}}}} ||_{L^{2}(P,d\sigma)} \leq O(1)$, thus
$$|| \widehat{g} ||_{L^{2}(P,d\sigma)} \leq O(1)+  \sum_{i=0}^{10 \log |F|} || \widehat{g 1_{G_i}} ||_{L^{2}} \lesssim O(1) + \log |F| \times \max_{i}  2^{-i} || 2^{i}\widehat{g 1_{G_i}} ||_{L^{2}}.$$
Now $2^{i} g 1_{G_i} \sim 1$ on its support $G_i$. By hypothesis, we have that $||  2^{i}\widehat{g 1_{G_i}} ||_{L^{2}(P,d\sigma)} \lesssim c(\epsilon) |F|^{\epsilon} |G_i|^{1/r_0}$, for every $\epsilon >0$. On the other hand, the normalization $||g||_{L^{r_0}} =1$ implies $2^{-i}|G_i|^{1/r_0} \lesssim 1$. Putting this together we have
$$|| \widehat{g} ||_{L^{2}(P,d\sigma)} \lesssim c(\epsilon) |F|^{\epsilon}\|g\|_{L^{r_{0}}(F^3)}$$ 
and the claim follows from Lemma \ref{lem:epsRem}.
\end{proof}

Next we recall some combinatorial quantities that will play a role in the proof. This requires we introduce some notation. Let $A \subset F^2$ we say that $(x_0,x_1,x_2)$ (with $ x_0,x_1,x_2 \in F^2$) forms a corner if
$$(x_1-x_0)\cdot(x_2-x_1)=0 $$
and that $(x_0,x_1,x_2,x_3)$ is a rectangle if each triple $(x_i,x_{i+1},x_{i+2},x_{i+3})$ forms a corner for $i\in{0,1,2,3}$ and with the indexing arithmetic modulo $4$. For a given rectangle, we will call the four pairs of points of the form $(x_i,x_{i+1})$ adjacent points. For $A \subseteq F^2$, we define $\mathcal{R}(A)$ to be the number of rectangles in $A$. Note that if we replace $F^2$ with the Euclidean plane, these definitions correspond to the natural geometric interpretation of these terms. An elementary combinatorial argument using the Cauchy-Schwarz inequality shows that 
\begin{equation}\label{eq:Rbound}\mathcal{R}(A) \lesssim |A|^{5/2}
\end{equation}
for all subsets $A \subseteq F^2.$ This estimate is implicit in the original paper of Mockenhaupt-Tao \cite{MT}, however is not formulated in this manner there. See \cite{RS} and \cite{LewR}. For large sets, an incidence of estimate of Vihn \cite{V} implies (see (15) in \cite{RS}) the bound
\begin{equation}\label{eq:VRbound}\mathcal{R}(A) \lesssim |F|^{-1}|A|^3 + |A|^2 |F|^{1/2}.
\end{equation}
This improves on \eqref{eq:Rbound} in the range $|F|\leq|A| \leq |F|^2$. For finite fields of prime order $p$, and $A$ sufficiently small, this estimate can be improved using recent advances in incidence theory (related to sum-product estimates). Using ideas from \cite{SD} and \cite{RS}, in \cite{LewR} the following was proved:
\begin{theorem}\label{thm:bestRec}Let $F_p$ denote a finite field of prime order $p$ and in which $-1$ is not a square. Let $A \subset F^2$ such that $|A|\leq p^\frac{26}{21}$ then
$$\mathcal{R}(A) \lesssim_{\epsilon}|A|^{\frac{99}{41}+\epsilon} $$
for any $\epsilon >0$.
\end{theorem}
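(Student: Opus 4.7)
The approach I would take is to translate the rectangle count into a point-plane incidence problem in $F^3$ and then invoke Rudnev's point-plane incidence theorem \cite{R}, with the collinearity correction controlled via a Stevens--de Zeeuw \cite{SD} Cartesian-product bound. The starting point is to parameterize a rectangle $(x_0,x_1,x_2,x_3) \in A^4$ by its midpoint $m = (x_0+x_2)/2 = (x_1+x_3)/2$ together with the half-diagonals $u = x_0 - m$, $v = x_1 - m$; in these coordinates the parallelogram condition is automatic and the right-angle condition reduces to the single scalar equation $|u|^2 = |v|^2$, giving
\[
\mathcal R(A) \;=\; \sum_{m \in F^2} \#\bigl\{(u,v) \in (F^2)^2 : m \pm u,\ m \pm v \in A,\ |u|^2 = |v|^2 \bigr\}.
\]
The hypothesis that $-1$ is not a square in $F_p$ ensures that the only isotropic direction is $0$, so the equal-norm constraint is genuinely nondegenerate.

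Next, following the scheme of Rudnev--Shkredov \cite{RS}, I would factor $|u|^2 - |v|^2 = (u-v)\cdot(u+v)$ to linearize the quadratic constraint: for each fixed value of $u - v$ the remaining relation is a linear equation in the coordinates of $u+v$, producing a family of planes in $F^3$ parameterized by pairs of $A$-differences and a point set built from $m$ together with the lifted data. Rudnev's incidence bound
\[
I(P,\Pi) \;\lesssim\; |P|^{3/4}|\Pi|^{3/4} + k \max(|P|,|\Pi|),
\]
where $k$ is the maximum number of collinear points, then supplies a main incidence term and a collinear correction. The correction is handled by invoking the Stevens--de Zeeuw collinearity bound for sets with Cartesian-product structure, which applies because the lifted point set inherits the structure of $A \times A$; the hypothesis $|A| \le p^{26/21}$ is precisely what is needed for this correction not to dominate the Rudnev main term.

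It then remains to dyadically decompose over popular level sets of the midpoint-multiplicity function (essentially the popular values of $r_{A+A}(m)$), apply the incidence bound on each dyadic shell, and optimize in the dyadic parameter. The exponent $\tfrac{99}{41} + \epsilon$ emerges from balancing the $|P|^{3/4}|\Pi|^{3/4}$ contribution against the Stevens--de Zeeuw collinearity input, and a final Cauchy--Schwarz reassembles the pieces into a bound on $\mathcal R(A)$. The main obstacle in this plan is the incidence reduction itself: one must set up the point-plane correspondence so that both sides retain enough Cartesian-product structure for the Stevens--de Zeeuw bound to furnish a sufficiently small $k$. Without this refinement one recovers only the earlier Rudnev--Shkredov exponent on rectangles; achieving $\tfrac{99}{41}$ requires a careful alignment of the Cartesian structure with the perpendicularity constraint, after which the dyadic optimization is essentially routine bookkeeping.
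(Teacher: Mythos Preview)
The paper does not prove this theorem: it is quoted from the author's earlier paper \cite{LewR}, with the remark that it combines ideas from \cite{SD} and \cite{RS} and ultimately rests on Rudnev's point-plane incidence theorem \cite{R}. There is therefore no proof in the present paper to compare your proposal against.

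Your outline is consistent with that attribution. The midpoint/half-diagonal parameterization is correct (one right angle in a parallelogram forces all four, and $(x_1-x_0)\cdot(x_2-x_1) = |u|^2-|v|^2$ in your coordinates), and the factorization $|u|^2-|v|^2=(u-v)\cdot(u+v)$ is the standard linearization step. What you have written, however, is a plan rather than a proof: you have not specified the point set $P$ and plane set $\Pi$ explicitly, you have not verified that the Stevens--de~Zeeuw Cartesian-product bound actually applies to your lifted configuration and yields a collinearity parameter $k$ small enough in the range $|A|\le p^{26/21}$, and you have not carried out the dyadic balancing that produces the specific exponent $\tfrac{99}{41}$. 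These are precisely the steps where the argument is delicate---a slightly different choice of point/plane sets recovers only the Rudnev--Shkredov exponent rather than $\tfrac{99}{41}$---so to complete the proof you would need to execute the construction and arithmetic in full, as in \cite{LewR}.
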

The conjectured optimal form of the Szemer\'edi-Trotter theorem in fields of prime order would imply this result with exponent $2+\epsilon$. 

Next we recall the Mockenhaupt-Tao machine, as formulated in \cite{IKL} (see also \cite{RS}). Let $g:F^3 \rightarrow \mathbb{C}$ such that $g \sim 1$ on its support $G \subseteq F^3$. We define $g_z : F^{2} \rightarrow \mathbb{C}$ by $g_z(x)= g(x,z)$ with support $G_z \subseteq F^2$.  With this notation we have the following

\begin{lemma}\label{lem:MT0} Let $g : F^3 \rightarrow \mathbb{C}$ such that $g \sim 1$ on its support $G$. We then have
$$ \|\widehat{g}\|_{L^2(P, d\sigma)} \lesssim |G|^{\frac{1}{2}} + |G|^{\frac{3}{8}} |F|^{-\frac{1}{8}}\left( \sum_{z \in F} (\mathcal{R}(G_z))^{1/4} \right)^{\frac{1}{2}} .$$
\end{lemma}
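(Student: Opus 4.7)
My plan is to start from $\|\widehat g\|_{L^2(P,d\sigma)}^2 = \|g\|_{L^2(F^3)}^2 + \langle g, g*K\rangle$, which follows by splitting $(d\sigma)^\vee = \delta + K$ as in the lead-up to \eqref{eq:decayL1}. The diagonal term is $\|g\|_{L^2(F^3)}^2 \sim |E|$ and its square root gives the first term $|E|^{1/2}$ of the claimed bound, so the real task is to prove
$$|\langle g, g*K\rangle| \lesssim |E|^{3/4}|F|^{-1/4}\sum_{z\in F}\mathcal R(E_z)^{1/4}.$$

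Using the formula \eqref{eq:evaulation}, a direct computation yields
$$(g*K)(\underline x, x_3) = \sigma_F|F|^{-1}\sum_{z\ne 0} T_{-z}(g_{x_3-z})(\underline x), \quad T_w f(\underline x) := \sum_{\underline y\in F^2} f(\underline y)\, e(|\underline x-\underline y|^2/(4w)),$$
so that $T_w$ is the finite-field analogue of a Schr\"odinger propagator (convolution against the quadratic-phase kernel $e(|\cdot|^2/(4w))$). Pairing with $g$ and using the triangle inequality gives
$$|\langle g, g*K\rangle|\le |F|^{-1}\sum_{z\ne 0}\sum_{t}|\langle g_t, T_{-z}(g_{t-z})\rangle|.$$
I would then apply H\"older in $\underline x$ with conjugate exponents $4/3,4$ to bound each bilinear pairing by $|E_t|^{3/4}\|T_{-z}(g_{t-z})\|_4$. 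Re-indexing $s=t-z$ and applying a second H\"older in $z$ (for each fixed $s$) with the same exponents yields
$$|\langle g, g*K\rangle|\le |F|^{-1}|E|^{3/4}\sum_{s}\Bigl(\sum_{z\ne 0}\|T_{-z}(g_s)\|_4^4\Bigr)^{1/4}.$$

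The main obstacle --- and the step where $\mathcal R(E_s)$ enters --- is the $L^4$-averaged propagator estimate
$$\sum_{w\ne 0}\|T_w 1_A\|_4^4 \le |F|^3\,\mathcal R(A), \qquad A\subseteq F^2.$$
To prove it, expand the fourth power: the $\underline x$-summation enforces the additive condition $y_1+y_3=y_2+y_4$, so the four points admit a parameterization $y_1=m+a,\;y_2=m+b,\;y_3=m-a,\;y_4=m-b$, and the quadratic phase collapses to $e((|a|^2-|b|^2)/(2w))$. This gives
$$\|T_w 1_A\|_4^4 = |F|^2\sum_{m\in F^2}\Bigl|\sum_{a\in R_m}e(|a|^2/(2w))\Bigr|^2, \qquad R_m = \{a\in F^2 : m\pm a\in A\}.$$
Summing over $w\ne 0$ and invoking the orthogonality $\sum_{w\ne 0}e(r/(2w)) = |F|\mathbf 1_{r=0}-1$ (via the bijection $w\mapsto 1/(2w)$ on $F\setminus\{0\}$) extracts exactly the 4-tuples with $|a|^2=|b|^2$, which by the identity $(x_1-x_0)\cdot(x_2-x_1) = |a|^2-|b|^2$ are precisely the ordered rectangles in $A$. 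Combining everything, summing over the slice index $s$, and taking the square root of the resulting bound $\|\widehat g\|_{L^2(P,d\sigma)}^2 \le |E| + |E|^{3/4}|F|^{-1/4}\sum_s\mathcal R(E_s)^{1/4}$ completes the proof.
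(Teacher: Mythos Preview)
Your proof is correct and follows essentially the same route as the paper's (implicit) argument: the paper does not prove Lemma~\ref{lem:MT0} directly but recovers it from \eqref{eq:start}--\eqref{eq:BiMT} together with \eqref{eq:LinToBilR} and the appendix identity \eqref{eq:gzToR}, and your propagator estimate $\sum_{w\ne 0}\|T_w 1_A\|_4^4 \le |F|^3\mathcal R(A)$ is exactly Lemma~\ref{lem:Rrel} rewritten (since $\|g_z*K\|_{L^4(F^3)}^4 = |F|^{-4}\sum_{w\ne 0}\|T_w g_z\|_4^4$). One small point: you state the $L^4$ estimate for $1_A$ but apply it to $g_s$; this is harmless since $|g_s|\lesssim 1$ on $E_s$ and after summing in $w$ the surviving ``rectangle'' terms are bounded in modulus by the count, but it is worth saying explicitly.
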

Using \eqref{eq:Rbound} and the fact that $\sum_{z}|G_z|^{5/8} \leq |F|^{\frac{3}{8}} |G|^{5/8}$ the above implies
\begin{equation}\label{eq:MTwCS}\|\widehat{g}\|_{L^2(P, d\sigma)} \lesssim  |G|^{\frac{1}{2}} + |G|^{\frac{3}{8}} |F|^{-\frac{1}{8}} \left( \sum_{z\in F} |G_z|^{\frac{5}{8}} \right)^{\frac{1}{2}} \lesssim  |G|^{\frac{1}{2}} + |F|^{\frac{1}{16}}  |G|^{\frac{11}{16}} 
\end{equation}
Combining the various estimates we have present we have the following:
\begin{lemma}Let $F$ be an arbitrary finite field of odd order in which $-1$ is not a square. Let $g: F^3 \rightarrow \mathbb{C}$ such that $g \sim 1$ on its support $G \subseteq F^3$. Then one has the following estimates:

\begin{equation}\label{eq:UcaseEst}|| \hat{g}||_{L^{2}(P,d\sigma)} \lesssim |G|^{\frac{1}{2}}+ \left\{\begin{array}{ll} |G| |F|^{-\frac{1}{2}}\quad &\mbox{if } |G| \leq |F|^{\frac{9}{5}}\\
|G|^{\frac{11}{16}} |F|^{\frac{1}{16}} &\mbox{if } |F|^{\frac{9}{5}} \leq  |G| \leq |F|^{2} \\
|G|^{\frac{5}{8}} |F|^{\frac{3}{16}} &\mbox{if } |F|^{2} \leq  |G| \leq |F|^{\frac{5}{2}} \\

|G|^{\frac{1}{2}} |F|^{\frac{1}{2}} &\mbox{if } |F|^{\frac{5}{2}} \leq  |G| \leq |F|^{3} \end{array}\right. 
\end{equation}
\end{lemma}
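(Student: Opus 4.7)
The plan is to read off four piecewise bounds by applying, in each range of $|G|$, whichever combination of the tools already assembled is sharpest. Three of the four regimes are immediate from what is in hand: the Fourier-decay estimate \eqref{eq:decayL1}, the Mockenhaupt--Tao bound \eqref{eq:MTwCS}, and the Parseval estimate \eqref{eq:bigL2}. Only the middle range $|F|^2 \leq |G| \leq |F|^{5/2}$ requires genuine work: there I return to Lemma \ref{lem:MT0} and replace the trivial rectangle count \eqref{eq:Rbound} with Vinh's bound \eqref{eq:VRbound}.

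For $|G| \leq |F|^{9/5}$, since $g \sim 1$ on $G$ I simply read \eqref{eq:decayL1} as $\|\widehat{g}\|_{L^2(P,d\sigma)} \lesssim |G|^{1/2} + |F|^{-1/2}|G|$. For $|F|^{9/5} \leq |G| \leq |F|^2$, the estimate \eqref{eq:MTwCS} gives the stated bound $|G|^{1/2} + |F|^{1/16}|G|^{11/16}$ directly. For the top range $|F|^{5/2} \leq |G| \leq |F|^3$, Parseval \eqref{eq:bigL2} yields $|F|^{1/2}|G|^{1/2}$. The thresholds $|F|^{9/5}$ and $|F|^{5/2}$ are exactly the crossovers between consecutive bounds (e.g.\ equating $|F|^{-1/2}|G|$ with $|F|^{1/16}|G|^{11/16}$ gives $|G| = |F|^{9/5}$); confirming this is algebra.

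For the middle range I start from Lemma \ref{lem:MT0} and insert Vinh's bound $\mathcal{R}(E_z) \lesssim |F|^{-1}|E_z|^3 + |F|^{1/2}|E_z|^2$, which improves on the trivial $|E_z|^{5/2}$ whenever $|E_z| \geq |F|$. Taking fourth roots pointwise gives $\mathcal{R}(E_z)^{1/4} \lesssim |F|^{-1/4}|E_z|^{3/4} + |F|^{1/8}|E_z|^{1/2}$, and summing over $z \in F$ via the elementary bound $\sum_z |E_z|^{\alpha} \leq |F|^{1-\alpha}|G|^{\alpha}$ (H\"older, using $\sum_z |E_z| = |G|$) produces
$$\sum_{z \in F} \mathcal{R}(E_z)^{1/4} \lesssim |G|^{3/4} + |F|^{5/8}|G|^{1/2}.$$
Substituting back into Lemma \ref{lem:MT0} and taking the square root leaves two error contributions, $|G|^{3/4}|F|^{-1/8}$ and $|G|^{5/8}|F|^{3/16}$; comparing exponents shows $|G|^{5/8}|F|^{3/16} \geq |G|^{3/4}|F|^{-1/8}$ precisely when $|G| \leq |F|^{5/2}$, so throughout the stated regime the bound collapses to $|G|^{1/2} + |G|^{5/8}|F|^{3/16}$.

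There is no conceptual obstacle here; the proof is an organizational task of matching the optimal tool to each size scale. The only mild care needed is in Case 3, where one must verify that the two error terms produced by H\"older and Vinh can both be absorbed into the single dominant term $|G|^{5/8}|F|^{3/16}$ throughout $[|F|^2, |F|^{5/2}]$, and that the four regimes join (up to constants) at their shared endpoints.
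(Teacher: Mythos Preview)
Your proposal is correct and follows essentially the same approach as the paper's own proof: each of the four regimes is handled by the same tool the paper uses (\eqref{eq:decayL1}, \eqref{eq:MTwCS}, Lemma~\ref{lem:MT0} combined with Vinh's bound \eqref{eq:VRbound}, and \eqref{eq:bigL2}, respectively), and your handling of the two error terms in the third regime matches the paper's computation. The only difference is that you spell out the crossover thresholds explicitly, which the paper leaves implicit.
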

\begin{proof}The first estimate follows from \eqref{eq:decayL1} and the last estimate follows from \eqref{eq:bigL2}. The second estimate can be obtained from using \eqref{eq:Rbound} in Lemma \ref{lem:MT0}. In particular this gives
$$\|\widehat{g}\|_{L^2(P, d\sigma)} \lesssim  |G|^{\frac{1}{2}} + |G|^{\frac{3}{8}} |F|^{-\frac{1}{8}} \left( \sum_{z\in F} |G_z|^{\frac{5}{8}} \right)^{\frac{1}{2}} \lesssim  |G|^{\frac{1}{2}} + |F|^{\frac{1}{16}}  |G|^{\frac{11}{16}}. $$
The third estimate is obtained from using \eqref{eq:VRbound} in Lemma \ref{lem:MT0}, as follows:
$$\|\widehat{g}\|_{L^2(P, d\sigma)} \lesssim  |G|^{\frac{1}{2}} + |G|^{\frac{3}{8}} |F|^{-\frac{1}{8}} \left(  |F|^{-\frac{1}{8}}\left( \sum_{z\in F} |G_z|^{\frac{3}{4}} \right)^{\frac{1}{2}}  + |F|^{\frac{1}{16}}\left( \sum_{z\in F} |G_z|^{\frac{1}{2}} \right)^{\frac{1}{2}}    \right)$$
$$\lesssim  |G|^{\frac{3}{4}} |F|^{-\frac{1}{8}}+ |G|^{\frac{5}{8}}|F|^{\frac{3}{16}} \lesssim |G|^{\frac{5}{8}}|F|^{\frac{3}{16}}.$$ \end{proof}
One can check that the Mockenhaupt-Tao estimate $\|\widehat{g}\|_{L^2(P, d\sigma)} \lesssim ||g||_{L^\frac{18}{13}(F^3)}$ follows\footnote{One losses the endpoint of the inequality in passing from level sets to arbitrary functions via the $\epsilon$ removal lemma. However, with more care, on can recover the endpoint, see \cite{LL}.} by applying each of these estimates for the given range of $|G|$. 

In finite fields of prime order on can use the stronger estimate of Theorem \ref{thm:bestRec} to improve these inequalities. The following estimates can be extracted from the argument in \cite{LewR}. We omit the details as we will not use this result, and these inequalities will be superseded by the estimates we obtain below.

\begin{lemma}Let $F$ be a prime order field in which $-1$ is not a square. Let $g: F^3 \rightarrow \mathbb{C}$ such that $g \sim 1$ on its support $G \subseteq F^3$. Then one has the following estimates:

\begin{equation}\label{eq:UcaseEstPrime}|| \hat{g}||_{L^{2}(P,d\sigma)} \lesssim_{\epsilon} |G|^{\frac{1}{2}}+ \left\{\begin{array}{ll} |G| |F|^{-\frac{1}{2}}\quad &\mbox{if } |G| \leq |F|^{\frac{94}{53}}\\
|G|^{\frac{111}{164} +\epsilon}  |F|^{\frac{3}{41}} &\mbox{if } |F|^{\frac{94}{53}} \leq  |G| \leq |F|^{\frac{75}{34}} \\
|G|^{\frac{5}{8}} |F|^{\frac{3}{16}} &\mbox{if } |F|^{\frac{75}{34}} \leq  |G| \leq |F|^{\frac{5}{2}} \\

|G|^{\frac{1}{2}} |F|^{\frac{1}{2}} &\mbox{if } |F|^{\frac{5}{2}} \leq  |G| \leq |F|^{3} \end{array}\right. 
\end{equation}

\end{lemma}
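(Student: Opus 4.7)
The plan is to follow exactly the template of the preceding (general field) lemma, but to substitute Theorem \ref{thm:bestRec} for the weaker rectangle bound \eqref{eq:Rbound} in the range where the prime-field estimate applies. Three of the four cases are in fact unchanged: the first and fourth bounds come from \eqref{eq:decayL1} and \eqref{eq:bigL2}, which are insensitive to the improvement in $\mathcal{R}$, and the third comes from feeding Vihn's bound \eqref{eq:VRbound} into Lemma \ref{lem:MT0} exactly as before. So the only new estimate is the second one, and Theorem \ref{thm:bestRec} enters only there.

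For that estimate, I would apply Lemma \ref{lem:MT0} and, on each slice, replace the factor $\mathcal{R}(G_z)^{1/4}$ by the prime-field bound $|G_z|^{99/164+\epsilon}$ supplied by Theorem \ref{thm:bestRec}. Since the exponent $99/164+\epsilon$ is less than $1$, H\"older's inequality against the constraint $\sum_z |G_z| = |G|$ controls $\sum_{z\in F} |G_z|^{99/164+\epsilon}$ by $|F|^{65/164-\epsilon}|G|^{99/164+\epsilon}$. Substituting back into Lemma \ref{lem:MT0} and combining exponents (which is a routine check: $\tfrac{3}{8} + \tfrac{99}{328} = \tfrac{111}{164}$ and $-\tfrac{1}{8} + \tfrac{65}{328} = \tfrac{3}{41}$) yields precisely $|G|^{111/164+\epsilon}|F|^{3/41}$. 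The thresholds $|F|^{94/53}$ and $|F|^{75/34}$ are then simply the crossover points of this new bound with the first and third bounds respectively, so no additional interpolation is required and each regime is governed by a single one of the four expressions.

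The one point where care is needed—and the only part of the argument that is not a mechanical copy of the previous lemma's proof—is the hypothesis $|G_z| \leq |F|^{26/21}$ in Theorem \ref{thm:bestRec}, which may fail on some slices even in the regime $|G| \leq |F|^{75/34}$. I would handle this by a preliminary dyadic pigeonhole on slice sizes: the number of slices with $|G_z| > |F|^{26/21}$ is bounded above by $|G|/|F|^{26/21}$, and on such slices one may fall back on \eqref{eq:VRbound} in place of Theorem \ref{thm:bestRec}. A short check, using that their combined mass is at most $|G|$ and that their count is limited, shows that their contribution to the sum in Lemma \ref{lem:MT0} is strictly dominated by the main term $|G|^{111/164+\epsilon}|F|^{3/41}$ throughout the range $|F|^{94/53} \leq |G| \leq |F|^{75/34}$. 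This bookkeeping is the main (and essentially only) obstacle; once it is in place the stated estimates follow immediately from the same interpolation scheme used in the previous lemma.
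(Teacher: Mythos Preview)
The paper does not actually prove this lemma: it merely states the estimates, remarks that they ``can be extracted from the argument in \cite{LewR}'', and explicitly says ``We omit the details as we will not use this result.'' So there is no proof in the paper to compare against. Your outline is the natural reconstruction and is correct: cases one, three and four are unchanged from the preceding lemma, and case two comes from inserting Theorem~\ref{thm:bestRec} into Lemma~\ref{lem:MT0} and applying H\"older over the at most $|F|$ slices, with the arithmetic $\tfrac{3}{8}+\tfrac{99}{328}=\tfrac{111}{164}$ and $-\tfrac{1}{8}+\tfrac{65}{328}=\tfrac{3}{41}$ exactly as you wrote.

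The one nontrivial point you flag --- that Theorem~\ref{thm:bestRec} requires $|G_z|\le |F|^{26/21}$ --- is handled correctly by your proposed split. For the large-slice piece one uses \eqref{eq:VRbound} together with the bound $w\le |G|\,|F|^{-26/21}$ on the number $w$ of large slices; tracking the exponents (the worst term is $|G|^{7/8}|F|^{-125/336}$) one finds this contribution is bounded by $|G|^{111/164}|F|^{3/41}$ precisely when $|G|\lesssim |F|^{2.24\ldots}$, which comfortably covers the range $|G|\le |F|^{75/34}\approx |F|^{2.206}$. So the bookkeeping goes through and the lemma follows.
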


One can check that these estimates imply $\|\widehat{g}\|_{L^2(P, d\sigma)} \lesssim ||g||_{L^{\frac{188}{135} - \epsilon}(F^3)}$, which is the main result from \cite{LewR}.

The remainder of this paper will be devoted to developing some improved estimates when $|G|$ in certain ranges.

\section{A Bilinear Estimate}
In this section we develop a bilinear version of Lemma \ref{lem:MT0}. As before, we will let $g : F^3 \rightarrow \mathbb{C}$ such that $g \sim 1$ on its support $G \subseteq F^3$. For $z \in F$ we consider the function obtained by restricting G to the vertical slice at height $z$, $g_z(x) := g(\underline{x})\delta(x_3-z)$. We let  $G_z$ denote the support of $g_z$. Now one has
\begin{equation}\label{eq:start} ||\hat{g}||_{L^{2}(P,d\sigma)} = | \left<g, g*(d\sigma)^{\vee}\right> |^{1/2} \lesssim ||g||_{L^2(F^3)} +  ||g||_{L^{4/3}(F^3)}^{1/2} ||g * K||_{L^4(F^3)}^{1/2} .\end{equation}
Expanding:
$$ ||g * K||_{L^4(F^d)}= || \sum_{z \in F} g_z * K ||_{L^4(F^3)} = || \sum_{z \in F} g_z* K \sum_{z' \in F} g_{z'} *K ||_{L^2(F^3)}^{1/2} $$
we have that
\begin{equation}\label{eq:BiMT} ||\hat{g}||_{L^{2}(P,d\sigma)} \lesssim  ||\hat{g}||_{L^2(F^3)} + |G|^{\frac{3}{8}} \left(\sum_{z \in F}   || g_z* K ||_{L^4(F^3)}^2 + \sum_{\substack{ z, z' \in F  \\ z\neq z'} } || g_z* K \times g_{z'} *K ||_{L^2(F^3)}  \right)^{1/4} .\end{equation}

The key new input will be an estimate for $|| g_z* K \times g_{z'} *K ||_{L^2(F^3)}$ with $z\neq z'$. First we note that linear estimates imply bilinear estimates by Cauchy-Schwarz:
\begin{equation}\label{eq:LinToBil}|| g_z* K \times g_{z'} *K ||_{L^2(F^3)} \lesssim ||g_z* K||_{L^4(F^3)} .||g_{z'}* K||_{L^4(F^3)}\end{equation}
This holds even without the restriction that $z \neq z'$. Also recall\footnote{This is well-known and has been used in prior work on the problem. However we could not find a clear statement in this form to cite, so we have included a short proof in the appendix (see Lemma \ref{lem:Rrel}) for self-containedness. } that the $L^4$ norm of the linear operator is controlled by the combinatorial quantity $\mathcal{R}$, introduced above:
\begin{equation}\label{eq:gzToR}||g_z* K||_{L^4(F^3)}^4 \lesssim  |F|^{-1} \mathcal{R}(G_z).
\end{equation}
Using this we have
\begin{equation}\label{eq:LinToBilR}|| g_z* K \times g_{z'} *K ||_{L^2(F^3)} \lesssim |F|^{-\frac{1}{2}} \left(\mathcal{R}(G_z) \mathcal{R}(G_{z'}) \right)^{1/4}. \end{equation}

Inserting \eqref{eq:LinToBilR} into \eqref{eq:BiMT} recovers Lemma \ref{lem:MT0}. Our goal is to obtain a new estimate for the quantity $|| g_z* K \times g_{z'} *K ||_{L^2(F^3)}$ when $z \neq z'$.

First we need some notation. Let $x_1,x_2,x_3,x_4 \in F^2$. We will say that $(x_1,x_2,x_3,x_4)$ forms a trapezoid if there exists a $\lambda \in F$ such that  $(x_1-x_2)= \lambda (x_3-x_4)$. We define $\mathcal{T}(A)$ to be the number of trapezoids in $A$. Similarly, for $A,B \subseteq F^2$ we define $\mathcal{T}(A,B)$ to be the number of trapezoids $(x_1,x_2,x_3,x_4)$ with $x_1,x_2 \in A$ and $x_3,x_4 \in B$. The bilinear operator is related to $\mathcal{T}(\cdot, \cdot)$ by the following:

\begin{proposition}\label{prop:bilinear} Let $z, z' \in F$ with $z \neq z'$. Let $g_z, g_{z'}: F^3 \rightarrow \mathbb{C}$ be functions supported on the vertical hyperplanes, as defined above, such that $g \sim 1$ on their supports $G_z, G_{z'} \subseteq F^2$. We have that 
$$|| g_z* K \times g_{z'} *K ||_{L^2(F^3)}^2 \lesssim |F|^{-1} |G_z| |G_{z'}| + |F|^{-2}\mathcal{T}(G_z, G_{z'})$$
\end{proposition}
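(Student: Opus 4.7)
The plan is to compute $\|h_z h_{z'}\|_{L^2(F^3)}^2$, where $h_z := g_z * K$ and $h_{z'} := g_{z'} * K$, by expanding the square, inserting the explicit formula \eqref{eq:evaulation} for $K$, and carrying out the resulting character sum in the first two coordinates of $y \in F^3$. Since $K$ is supported on $\{u_3 \neq 0\}$ and $g_z$ lives on the slice $\{x_3 = z\}$, one has $h_z(y) = 0$ whenever $y_3 = z$, and likewise $h_{z'}$ vanishes on $\{y_3 = z'\}$. Thus the sum over $y$ reduces to $y_3 \in F \setminus \{z, z'\}$.

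For every such $y_3$, each kernel evaluation in $|h_z(y)|^2 |h_{z'}(y)|^2$ has modulus $|F|^{-1}$, and expanding $|\underline{y} - p|^2 = |\underline{y}|^2 - 2\underline{y}\cdot p + |p|^2$ (the $|\underline{y}|^2$ terms cancel between each conjugate pair) produces a quadruple character sum over $(p_1, p_2) \in G_z^2$ and $(q_1, q_2) \in G_{z'}^2$ with phase
\[\underline{y} \cdot \!\left(\frac{p_1 - p_2}{2(y_3 - z)} + \frac{q_1 - q_2}{2(y_3 - z')}\right) + \Psi(p_1, p_2, q_1, q_2, y_3),\]
where $\Psi$ is $\underline{y}$-independent. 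Summing first over $\underline{y} \in F^2$ yields a factor $|F|^2$ exactly when
\[(y_3 - z')(p_1 - p_2) + (y_3 - z)(q_1 - q_2) = 0,\]
and $0$ otherwise. Each surviving tuple is therefore weighted by $|F|^{-4} \cdot |F|^2 = |F|^{-2}$ before summing over the admissible $y_3$, with $|e(\Psi)| = 1$.

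A short case analysis of this linear condition finishes the argument. If $p_1 = p_2$ and $q_1 = q_2$, the condition holds for every $y_3$ and $\Psi$ is trivial, giving a contribution of order $|F|^{-1}|G_z||G_{z'}|$. If exactly one of $p_1 - p_2$, $q_1 - q_2$ vanishes, the condition forces $y_3 \in \{z, z'\}$, which has been excluded. In the remaining case both differences are nonzero and $F$-proportional, so $(p_1, p_2, q_1, q_2)$ is a trapezoid in the sense of the paper; writing $p_1 - p_2 = \lambda(q_1 - q_2)$ with $\lambda \in F^\times$, the equation becomes $y_3(\lambda + 1) = \lambda z' + z$. When $\lambda = -1$ the hypothesis $z \neq z'$ shows there is no solution, and otherwise there is a unique $y_3$, which one checks is automatically distinct from $z$ and $z'$ (using $\lambda \neq 0$ and $z \neq z'$). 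Bounding $|e(\Psi)|$ by $1$ in this case, the total contribution is at most $|F|^{-2}\mathcal{T}(G_z, G_{z'})$, and summing the two contributions gives the claim.

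The only substantive subtlety is the verification that in the non-degenerate trapezoid case the unique $y_3$ selected by the linear equation avoids $\{z, z'\}$, which is exactly where the hypotheses $z \neq z'$ and $\lambda \neq 0$ are used; every other step is a direct computation with the explicit form of $K$.
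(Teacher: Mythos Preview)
Your argument is correct and follows essentially the same route as the paper's proof: both expand $\|g_z*K\cdot g_{z'}*K\|_{L^2}^2$ using the explicit formula for $K$, sum the resulting character over $\underline{y}\in F^2$ to obtain the linear constraint $(y_3-z')(p_1-p_2)+(y_3-z)(q_1-q_2)=0$, and then sort contributions according to whether the differences $p_1-p_2$, $q_1-q_2$ vanish. Your write-up is in fact slightly more careful than the paper's in verifying that the unique $y_3$ in the non-degenerate case avoids $\{z,z'\}$, but the underlying idea is identical.
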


\begin{proof} Let $s_1=z$ and $s_2=z'$, and let $f_1,f_2 : F^{2} \rightarrow \mathbb C $ be defined by $f_1(x)=g_z(x,s_1)$ and $f_2(x)=g_{z'}(x,s_2)$. We may then compute $$|| g_z* K \times g_{z'} *K ||_{L^2(F^d)}^2$$
$$=  |F|^{-4} \sum_{\substack{y \in F^{2}; t \in F  \\ t \neq s_1,s_2 }} \left| \sum_{x_1 \in F^2}f_1(x_1) e\left(-\frac{(x_1-y)\cdot(x_1 -y) }{4(t-s_1)}   \right) \sum_{x_2 \in F^2}f_2(x_2) e\left(-\frac{(x_2-y)\cdot(x_2 -y) }{4(t-s_2)} \right)  \right|^2 $$
Removing factors of modulus one, thanks to the absolute values, and translating $t$ by $s_1$ and writing $s=s_2-s_1$, we have:

$$=  |F|^{-4} \sum_{\substack{y \in F^{2}; t \in F  \\ t \neq 0,s }} \left| \sum_{x_1 \in F^2}f_1(x_1) e\left(-\frac{x_1\cdot x_1 -2 x_1\cdot y }{4t}   \right) \sum_{x_2 \in F^2}f_2(x_2) e\left(-\frac{x_2\cdot x_2 -2 x_2\cdot y }{4(t-s)} \right)  \right|^2. $$

Expanding the square and reordering the sums we have that this

$$\sum_{x_1,x_2,x_3,x_4 \in F^2} f_1(x_1)\overline{f_1(x_3)}f_2(x_2)\overline{f_2(x_4)} $$
$$\times \sum_{\substack{y \in F^{2}; t \in F  \\ t \neq 0,s }} e \left( \frac{x_3\cdot x_3 -x_1 \cdot x_1 +2x_1 \cdot y - 2x_3 \cdot y }{4t} + \frac{x_4\cdot x_4 -x_2 \cdot x_2 +2x_2 \cdot y - 2x_4 \cdot y  }{4(t-s)} \right). $$
We may rewrite inner sum as:
$$\sum_{\substack{t \in F  \\ t \neq 0,s }} e \left( \frac{x_3\cdot x_3 -x_1 \cdot x_1 }{4t} + \frac{x_4\cdot x_4 -x_2 \cdot x_2 }{4(t-s)} \right) \sum_{y\in F^2} e \left( y\cdot \left( \frac{x_1 - x_3}{2t} + \frac{x_2  - x_4   }{2(t-s)} \right) \right).$$

The inner sum will vanish unless 
\begin{equation}\label{eq:x1}x_1-x_3 = \frac{t(x_2-x_4)}{t-s}.\end{equation}
If $x_1 \neq x_3$ or $x_2 \neq x_4$, this will occur for at most one value of $t$ if and only if $x_1-x_3$ is a scalar multiple of $x_2-x_4$ or, in other words, $(x_1,x_3,x_2,x_4)$ form a trapezoid.
Taking into account the contribution also from the quadruples with $x_1=x_3$ and $x_2=x_4$, this shows that the quantity above is at most
$$ |F|^{-1} |G_z| |G_{z'}| + |F|^{-2}\mathcal{T}(G_z,G_{z'})$$
which completes the proof.
\end{proof}

Let us pause to explain why the reader familiar with the numerology of the problem might be skeptical of the usefulness of Proposition \ref{prop:bilinear}. Consider $|G_z|\sim |G_{z'}|\sim m$ for some fixed $m$. The weakest of the estimates we have given for $\mathcal{R}(\cdot)$, ($\mathcal{R}(A) \lesssim |A|^{\frac{5}{2}}$) implies, via \eqref{eq:LinToBil}, that $|| g_z* K \times g_{z'} *K ||_{L^2(F^3)} \lesssim |F|^{-\frac{1}{2}} m^{\frac{5}{4}} $. On the other hand, since $\mathcal{T}(G_z, G_{z'})$ can be as large as $m^{4}$, our bilinear estimate only gives $|| g_z* K \times g_{z'} *K ||_{L^2(F^3)} \lesssim |F|^{-1} m^2$, which is inferior in the range of $m$ we must control. Indeed this can happen if $G_z$ and $G_{z'}$ are concentrated on a small number of parallel lines. In other words, in this case our new estimate gives a worse bound than the previously available estimates.

Our key observation is that the sets that give near-maximal values of $\mathcal{T}(A, B)$ are, in some sense, not compatible with the sets that give near maximal values of $\mathcal{R}(A) \mathcal{R}(B)$. Defining
$$\mathcal{B}(A,B) := \min\{\mathcal{T}(A, B), |F| \left( \mathcal{R}(A) \mathcal{R}(B)\right)^{1/2} \}$$
combining the linear and bilinear estimate we have:
\begin{lemma}\label{lem:biCont} In the notation above we have:
$$|| g_z* K \times g_{z'} *K ||_{L^2(F^3)}^2 \lesssim |G_z||G_{z'}| |F|^{-1} +  |F|^{-2} \mathcal{B}(G_{z},G_{z'}).$$
\end{lemma}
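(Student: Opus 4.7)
The plan is to combine the two a priori bounds already at our disposal for the quantity $|| g_z * K \times g_{z'} * K ||_{L^2(F^3)}^2$. On the one hand, Proposition \ref{prop:bilinear} provides
$$ || g_z * K \times g_{z'} * K ||_{L^2(F^3)}^2 \lesssim |F|^{-1}|G_z||G_{z'}| + |F|^{-2}\mathcal{T}(G_z,G_{z'}). $$
On the other hand, squaring the Cauchy--Schwarz bound \eqref{eq:LinToBilR} yields
$$ || g_z * K \times g_{z'} * K ||_{L^2(F^3)}^2 \lesssim |F|^{-1}\bigl(\mathcal{R}(G_z)\mathcal{R}(G_{z'})\bigr)^{1/2} = |F|^{-2}\cdot |F|\bigl(\mathcal{R}(G_z)\mathcal{R}(G_{z'})\bigr)^{1/2}. $$

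I would then case-split according to which of the two quantities $\mathcal{T}(G_z,G_{z'})$ and $|F|\bigl(\mathcal{R}(G_z)\mathcal{R}(G_{z'})\bigr)^{1/2}$ achieves the minimum defining $\mathcal{B}(G_z,G_{z'})$. In the first case, Proposition \ref{prop:bilinear} directly yields the claim. In the second case, the squared form of \eqref{eq:LinToBilR} already gives
$$ || g_z * K \times g_{z'} * K ||_{L^2(F^3)}^2 \lesssim |F|^{-2}\mathcal{B}(G_z,G_{z'}) \leq |F|^{-1}|G_z||G_{z'}| + |F|^{-2}\mathcal{B}(G_z,G_{z'}), $$
so the stated bound again holds. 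Either way we obtain the desired inequality.

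I do not expect any real obstacle here: this lemma is essentially a bookkeeping statement packaging the better of the two previously derived estimates into a single inequality, and its proof requires no new combinatorial input beyond what Proposition \ref{prop:bilinear} and \eqref{eq:LinToBilR} already supply. The genuine difficulty has been displaced downstream, to the task of actually controlling $\mathcal{B}(A,B)$ in a way that improves on separately bounding $\mathcal{T}(A,B)$ and $\mathcal{R}(A)\mathcal{R}(B)$ via the rectangle/trapezoid dichotomy (Proposition~\ref{prop:TboundGF}) advertised in the abstract.
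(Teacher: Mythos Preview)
Your proposal is correct and matches the paper's approach exactly: the paper simply states that the lemma follows by ``combining the linear and bilinear estimate,'' which is precisely the case-split you carry out between Proposition~\ref{prop:bilinear} and the squared form of \eqref{eq:LinToBilR}. Your assessment that this is a bookkeeping lemma with the real work deferred to Proposition~\ref{prop:TboundGF} is also accurate.
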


In the next section we will show that geometric considerations imply better estimates on $\mathcal{B}(A,B)$ than are implied by the best known estimates for $\mathcal{R}(\cdot)$.

\section{A Geometric Estimate}
Our aim will be to obtain various estimates on the quantity $\mathcal{T}$ introduced in the last section. We will always assume that $F$ is a finite field of odd characteristic in which $-1$ is not a square. Trivially
$$\mathcal{T}(A,B) \leq |A|^2|B|^2, $$
which, as we have already remarked, is nearly sharp in the sense that $\mathcal{T}(A,B)\sim |A|^2|B|^2$ if all of the points lie on a small number of parallel lines. Unlike in the case of $\mathcal{R}$ one can not hope for better estimates simply by restricting the field or the cardinalities of $A$ and $B$. Our observation is that this estimate for $\mathcal{T}(A,B)$ is only extremized when the sets $A$ and $B$ concentrate on a few lines, while the known estimates for $\mathcal{R}(\cdot)$ are only extremized when the set concentrates on a grid with more evenly balanced side lengths/projections. We make this more precise in the following results.

To avoid some technical complications, we will work with what we call $k$ regular sets. We say that a set $A \subset F^2$ is $k$ regular (for $k \leq |A|^{\frac{1}{2}}$) if it can be written as the disjoint union of points from $\sim k$ distinct lines each containing $\sim |A|k^{-1}$ points. We call the $k$ lines used in the definition the frame of $A$, and note that if $\ell$ is not in the frame of $A$ then $|\ell \cap A| \leq k$. We will call a set irregular if $|\ell \cap A| \leq |A|^\frac{1}{2} $ for all $\ell$. The usefulness of this definition follows from the fact we can always decompose a set as the disjoint union of $O( \log |F| )$ $k$-regular subsets and an irregular set.

\begin{lemma}\label{lem:kregPart}A set $A \subseteq F^2$ can be decomposed as a disjoint union of 
$$ A = \bigcup_{\substack{k \text{ dyadic}  \\ 1 \leq k \leq |A|^{1/2} }} A_k + A_*$$
where $A_k$ is $k$ regular and $A_*$ is irregular.
\end{lemma}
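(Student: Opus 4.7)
The natural approach is a double dyadic pigeonhole: once on the richness of the richest line through each point, and once on the per-line point counts of the resulting classes.

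For each $x\in A$, let $\tau(x):=\max\{|\ell\cap A|:\ell\ni x\}$ denote the maximum number of points of $A$ on any line through $x$. Partitioning $A$ by the dyadic scale of $\tau(x)$ yields level sets $B_n:=\{x\in A:\tau(x)\in[n,2n)\}$. Set $A_*:=\bigsqcup_{n\leq|A|^{1/2}} B_n$; for any line $\ell$ meeting $A_*$, picking $x\in\ell\cap A_*$ gives $|\ell\cap A_*|\leq|\ell\cap A|\leq\tau(x)\leq 2|A|^{1/2}$, which establishes the required irregularity of $A_*$ up to the constants hidden in the $\sim$ notation.

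For each dyadic $n>|A|^{1/2}$, let $\mathcal{L}_n:=\{\ell:|\ell\cap A|\in[n,2n)\}$. A double-counting argument (two distinct lines share at most one point, so the union of $k$ lines each with $\geq n$ points contains at least $kn-\binom{k}{2}$ points of $A$) yields $|\mathcal{L}_n|\lesssim|A|/n\lesssim|A|^{1/2}$. Each $x\in B_n$ lies on some line of $\mathcal{L}_n$; assigning such an $\ell(x)$ arbitrarily partitions $B_n$ into blocks, one per line in $\mathcal{L}_n$. A second dyadic pigeonhole on block sizes, letting $\mathcal{L}_{n,m}:=\{\ell\in\mathcal{L}_n:|\ell(\cdot)^{-1}(\ell)\cap B_n|\in[m,2m)\}$ and $B_{n,m}$ the union of the corresponding blocks, produces a set which, by construction, is a disjoint union of subsets of $k:=|\mathcal{L}_{n,m}|$ distinct lines, each of size $\sim m\sim|B_{n,m}|/k$. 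Thus $B_{n,m}$ is $k$-regular, and since $k\leq|\mathcal{L}_n|\lesssim|A|^{1/2}$ the range of $k$ is as required.

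The $O(\log^2|F|)$ pieces $B_{n,m}$ thus obtained, collected by their dyadic $k$-value, give the decomposition claimed in the lemma. The main subtle point is the bound $|\mathcal{L}_n|\lesssim|A|^{1/2}$ in Step 2: the quadratic inequality $|A|\geq kn-\binom{k}{2}$ is easy to solve when $n\geq\sqrt{2|A|}$, but in the narrow dyadic range $|A|^{1/2}<n<\sqrt{2|A|}$ one needs an additional Cauchy--Schwarz refinement, bounding $\sum_x(\#\{\ell\in\mathcal{L}_n:\ell\ni x\})^2$ in terms of $|A|$ and $k$ via the fact that any two lines meet in at most one point, to close the gap.
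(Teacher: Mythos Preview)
Your double-pigeonhole approach is different from the paper's, which uses a simpler greedy extraction: repeatedly remove the richest remaining line from $A$, bin the removed points by the dyadic size $n$ of that line, and stop once no line has more than $|A|^{1/2}$ points; the remainder is $A_*$. Because points are deleted as each line is extracted, the $j$ lines collected at dyadic scale $n$ are genuinely disjoint, so the corresponding piece has $\sim jn$ points. Since $jn \le |A|$ and $n>|A|^{1/2}$, one gets $j<|A|^{1/2}<n$, hence $j\le(\text{piece size})^{1/2}$ and the piece is $j$-regular automatically.

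Your argument has a gap at exactly this spot. You assert that $B_{n,m}$ is $k$-regular with $k=|\mathcal{L}_{n,m}|$ and then check only that $k\lesssim|A|^{1/2}$. But the definition of $k$-regularity requires $k\le|B_{n,m}|^{1/2}\sim(km)^{1/2}$, i.e.\ $k\lesssim m$; this is what ensures the frame lines are the rich ones and non-frame lines meet the set in at most $k\le(\text{size})^{1/2}$ points, which is the property actually used in Lemmas~\ref{lem:Test} and~\ref{lem:Rest}. With your arbitrary assignment $x\mapsto\ell(x)$, a line $\ell\in\mathcal{L}_n$ may receive very few of its own points of $B_n$ (they can be assigned to other lines of $\mathcal{L}_n$ passing through them), so block sizes $m\ll k$ are not ruled out. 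In that regime $B_{n,m}$ is neither $k$-regular nor irregular: a frame line can meet $B_{n,m}$ in up to $\sim m+(k-1)\sim k>(km)^{1/2}$ points. The greedy extraction sidesteps this because each extracted line keeps \emph{all} of its remaining points, forcing block size $\sim n>|A|^{1/2}>j$; your second pigeonhole on $m$ is then unnecessary, and the delicate Cauchy--Schwarz refinement you flag for the borderline range $|A|^{1/2}<n<\sqrt{2|A|}$ also disappears, since $jn\le|A|$ gives $j\le|A|/n$ directly.
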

\begin{proof}The decomposition is constructed with a greedy algorithm as follows. Let $k_{0}$ be the maximal size of the intersection of a line with $A$. We iteratively build $A_k$, $k$ dyadic, by selecting the points in $A$ that lie on a line with maximal intersection with $A$, say, $k'$ and then removing those points from $A$ and adding them to $A_k$ where $k' \sim k$, until $|A \cap \ell | \leq |A|^{1/2}$ and then we set this set to be $A_*$.
\end{proof}

We now turn to estimating $\mathcal{T}(\cdot,\cdot)$.
\begin{lemma}\label{lem:Test}Let $A_1,A_2 \subseteq F^2$ be sets each formed as the union of $|A_i|k_i^{-1}$ points from $\sim  k_i$ lines, where $k_i \leq |A_i|^{\frac{1}{2}}$.  Then 
$$|\mathcal{T}(A_1,A_2)| \lesssim  |A_1|^2|A_2| k_2 + |A_1||A_2|^2 k_1 +|A_1|^2 |A_2|^2 k_1^{-1}k_2^{-1}. $$
Furthermore if $A_1,A_2$ are irregular, then
$$|\mathcal{T}(A_1,A_2)| \lesssim \min\{ |A_1|^{\frac{3}{2}} |A_2|^{2}, |A_2|^{\frac{3}{2}} |A_1|^{2}\}.$$
\end{lemma}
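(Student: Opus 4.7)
The plan is to derive an exact identity for $\mathcal{T}(A_1,A_2)$ in terms of direction counts and then estimate those counts using the regular/irregular line structure. For a set $A$ and a direction $d$ in $F^2$, let $r_A(d) := \#\{(x,y) \in A^2 : x \neq y,\ x-y \text{ has direction } d\}$. Splitting the quadruple count of $\mathcal{T}(A_1,A_2)$ on the degeneracies $x_1=x_2$ and $x_3=x_4$ (when $x_3=x_4$ we are forced to have $x_1=x_2$; when $x_1=x_2$ and $x_3 \neq x_4$ any choice of $(x_3,x_4)$ works via $\lambda=0$; otherwise parallelism is required) yields
\begin{equation*}
\mathcal{T}(A_1,A_2) = |A_1||A_2|^2 + \sum_{d} r_{A_1}(d)\, r_{A_2}(d).
\end{equation*}

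Next I would estimate $r_{A_i}(d) = \sum_{\ell:\, \mathrm{dir}(\ell)=d} |\ell\cap A_i|(|\ell\cap A_i|-1)$ by splitting lines in direction $d$ into frame and non-frame lines. Non-frame lines satisfy $|\ell \cap A_i| \leq k_i$, and the point counts over the lines of a single direction sum to at most $|A_i|$, so their total contribution is $\leq k_i|A_i|$. The frame lines in direction $d$, of which there are $n_d^{(i)}$ many with $\sum_d n_d^{(i)} \lesssim k_i$, contribute $\lesssim n_d^{(i)}(|A_i|/k_i)^2$. Consequently $r_{A_i}(d) \leq k_i|A_i|$ when $d$ is not a frame direction of $A_i$ (i.e.\ $d \notin D_i$), while $r_{A_i}(d) \lesssim n_d^{(i)}(|A_i|/k_i)^2 + k_i|A_i|$ otherwise.

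To prove the first inequality I would split $\sum_d r_{A_1}(d)r_{A_2}(d)$ by whether $d$ lies in $D_1$ or $D_2$. For $d \notin D_1$, factoring out $r_{A_1}(d) \leq k_1|A_1|$ and using $\sum_d r_{A_2}(d) \leq |A_2|^2$ gives $k_1|A_1||A_2|^2$; symmetrically $d \in D_1 \setminus D_2$ contributes $k_2|A_1|^2|A_2|$. The delicate case is $d \in D_1 \cap D_2$, where the main term is
\begin{equation*}
\sum_{d \in D_1 \cap D_2} n_d^{(1)} n_d^{(2)}\, \frac{|A_1|^2|A_2|^2}{k_1^2 k_2^2}.
\end{equation*}
Cauchy--Schwarz combined with $\sum_d n_d^{(i)} \lesssim k_i$ and $n_d^{(i)} \leq k_i$ (so $\sum_d (n_d^{(i)})^2 \lesssim k_i^2$) gives $\sum_d n_d^{(1)} n_d^{(2)} \lesssim k_1 k_2$, producing the $|A_1|^2|A_2|^2/(k_1k_2)$ term. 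The remaining cross terms from expanding the product of the two estimates for $r_{A_i}(d)$ are absorbed into the first two bounds via $k_i \leq |A_i|^{1/2}$.

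For the irregular case the hypothesis $|\ell \cap A_i| \leq |A_i|^{1/2}$ yields the uniform bound $r_{A_i}(d) \leq |A_i|^{1/2}|A_i| = |A_i|^{3/2}$ for every $d$, whence $\sum_d r_{A_1}(d)r_{A_2}(d) \leq |A_1|^{3/2}\sum_d r_{A_2}(d) \leq |A_1|^{3/2}|A_2|^2$ and the stated minimum follows by swapping roles. The main obstacle is the $D_1 \cap D_2$ case above: the naive estimate $\min(k_1,k_2)(|A_1|/k_1)^2(|A_2|/k_2)^2 = |A_1|^2|A_2|^2/\max(k_1,k_2)$ is too weak, and the Cauchy--Schwarz step reflects the fact that two regular sets cannot simultaneously concentrate all their frame lines on the same few directions while retaining the full budget of $k_i$ frame lines each.
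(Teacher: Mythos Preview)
Your argument is correct. The route differs from the paper's in organization: you pass through the identity $\mathcal{T}(A_1,A_2)=|A_1||A_2|^2+\sum_d r_{A_1}(d)r_{A_2}(d)$ and then estimate the direction sums, whereas the paper does a direct three-way case split on whether the line through $(x_1,x_2)$ lies in the frame $L_1$ and whether the line through $(y_1,y_2)$ lies in $L_2$. For the ``both in frame'' case the paper's treatment is actually simpler than yours: it drops the parallelism constraint entirely and bounds by the total number of quadruples of collinear pairs, namely $\bigl(\sum_{\ell\in L_1}|\ell\cap A_1|^2\bigr)\bigl(\sum_{\ell\in L_2}|\ell\cap A_2|^2\bigr)\lesssim |A_1|^2|A_2|^2/(k_1k_2)$. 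In your language this corresponds to the trivial bound $\sum_d n_d^{(1)}n_d^{(2)}\le\bigl(\sum_d n_d^{(1)}\bigr)\bigl(\max_d n_d^{(2)}\bigr)\lesssim k_1k_2$, so Cauchy--Schwarz is not needed and the ``main obstacle'' you flag is not really an obstacle. On the other hand, your version is more explicit about the degenerate quadruples ($x_1=x_2$ or $x_3=x_4$), which the paper's proof glosses over; these contribute $O(|A_1||A_2|^2)$ and are absorbed into the $|A_1||A_2|^2k_1$ term, so the omission is harmless.
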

\begin{proof} Let $L_i$ denote the frame of $A_i$. We will make use of the fact that if $\ell \notin L_i$  then $|\ell \cap A_i| \lesssim k_i$. We wish to count the number of trapezoids $(x_1,x_2,y_1,y_2)$ with $x_1,x_2 \in A_1$ and $y_1,y_2 \in A_2$. We first consider the contribution from quadruples where the lines through $(x_1,x_2)$ and $(y_1,y_2)$ are in $L_1$ and $L_2$, respectively. This can be bounded by 
$$|L_1|^2 |L_2|^2 k_1 k_2  \sim |A_1|^2 k_1^{-2} |A_2|^2 k_2^{-2}  k_1 k_2  \sim |A_1|^2 |A_2|^2 k_1^{-1}k_2^{-1} $$
which is the last term in the statement of the lemma. Next we consider the contribution from trapezoids where the line through $(y_1,y_2)$ is not in $L_2$. The number of such quadruples is at most the number of triples $(x_1,x_2,y_3)$ multiplied by the maximum number of points on the line through $y_3$ parallel to $x_1,x_2$. As we remarked at the start, this latter quantity is is at most $k_2$. Thus this contribution is at most
$|A_1|^2 |A_2| k_2.$
Switching the roles of $A_1$ and $A_2$, we see the contribution of quadruples such that the line through $(x_1,x_2)$ is not in $L_1$ is similarly bounded by 
$|A_1| |A_2|^2 k_1.$
Putting these estimates together completes the proof of the first estimate. The second estimate follows from this latter argument with $k_i \sim |A_i|^{\frac{1}{2}}$.
\end{proof}

\begin{lemma}\label{lem:Rest}Let $A \subseteq F^2$ be a $k$ regular set. Then 
$$\mathcal{R}(A)\lesssim|A|^2 k .$$
\end{lemma}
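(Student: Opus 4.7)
The plan is to bound $\mathcal{R}(A)$ by a count of ``corners'' at a fixed vertex and then case-split according to whether the two perpendicular edges lie on frame lines of $A$. Writing a rectangle as $(x_0, x_0+v, x_0+v+w, x_0+w)$ with $v \cdot w = 0$ and all four vertices in $A$, I would drop the requirement that the fourth vertex $x_0+v+w$ lie in $A$, obtaining $\mathcal{R}(A) \leq \#\{(x_0, x_1, x_3) \in A^3 : (x_1-x_0)\cdot(x_3-x_0) = 0\}$. The degenerate contributions (where $x_1 = x_0$ or $x_3 = x_0$) sum to $O(|A|^2)$, well within the target. For a non-degenerate corner, $x_1$ and $x_3$ determine two perpendicular lines $\ell_1, \ell_3$ through $x_0$, and I would split the count based on whether $0$, $1$, or $2$ of these lines belong to the frame of $A$.

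Both frame: since $A$ has only $\sim k$ frame lines, the number of ordered perpendicular pairs of frame lines is at most $\sum_d D_d D_{d^\perp} \leq k \cdot \max_d D_{d^\perp} \leq k^2$, where $D_d$ denotes the number of frame lines in direction $d$ (using $\sum_d D_d = k$ and $D_d \leq k$). Each such pair meets in a single point, so the contribution is at most $k^2 \cdot (|A|/k)^2 = |A|^2$. Exactly one frame (say $\ell_1$ frame, $\ell_3$ not): parametrizing by $\ell_1$ ($\lesssim k$ choices) and $x_0 \in A \cap \ell_1$ ($\sim |A|/k$ choices) yields $\lesssim |A|$ pairs $(x_0, \ell_1)$, and for each the contribution is bounded by $(|A|/k) \cdot k = |A|$ using $|A \cap \ell_1| \lesssim |A|/k$ and $|A \cap \ell_3| \leq k$ for the (non-frame) perpendicular line $\ell_3$ through $x_0$. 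This gives $|A|^2$, with a symmetric bound for the other subcase.

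Neither frame is the dominant case: I would bound the number of ordered pairs $(x_0, x_1) \in A^2$ trivially by $|A|^2$, and for each such pair the perpendicular line $\ell_3$ through $x_0$ is non-frame and so meets $A$ in at most $k$ points, giving at most $k|A|^2$ corners in this case. Combining, $\mathcal{R}(A) \lesssim |A|^2 + |A|^2 + k|A|^2 \lesssim k|A|^2$. I anticipate no serious obstacle; the one delicate point is the both-frame estimate, where the naive bound via $\sum_{x_0 \in A} F(x_0) = |A|$ (with $F(x_0)$ the number of frame lines through $x_0$) would produce the weaker $|A|^3/k^2$, inadequate when $k \ll |A|^{1/2}$. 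Exploiting instead that perpendicular pairs of frame lines are controlled globally by $k^2$ is what makes this case match the target.
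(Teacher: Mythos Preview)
Your proposal is correct and follows essentially the same approach as the paper: bound rectangles by corners (triples with a right angle at a fixed vertex) and case-split on whether the two edges at that vertex lie on frame lines, with the dominant $k|A|^2$ contribution coming from the case where neither edge is a frame line. The only organizational difference is that you fix a single corner and case-split there, whereas the paper phrases the case analysis in terms of whether \emph{any} of the four sides of the rectangle lies in the frame before passing to corners; the bounds in each case are identical.
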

\begin{proof}We let $L$ denote the frame of $A$. We first count the number of rectangles with sides in $L$. Let us consider $(x_0,x_1,x_2,x_3) \in A^4$ that form a rectangle. Consider a corner which we will write, without loss of generality, as $$(x_1-x_0)\cdot(x_2-x_1)=0. $$ Let us write $\ell_1$ for the line through $(x_1,x_0)$ and $\ell_2$ for the line through $(x_2,x_1)$. 

First consider the case where  $\ell_1, \ell_2 \in L$. The contribution of such rectangles can be bounded\footnote{We implicitly use here that $-1$ is not a square in $F$. Without this restriction $F$ will contain isotropic lines, and any three points on an isotropic line will form a corner.} by the number of possible pairs $(\ell_1, \ell_2)$ times the number of pairs of points with one on each of these lines, or 
$$|L|^{2} \times |A \cap \ell_1 | \times |A \cap \ell_2 | \lesssim k^2 \times |A|^2 k^{-2} \lesssim |A|^2. $$
Next we consider the contribution of rectangles where $\ell_1 \in L$ and $\ell_2 \notin L$. Here we will use  that $|\ell_2 \cap A |\lesssim k$. We can bound the contribution from such corners/rectangles as the number of ways of selecting $\ell_1$ times the square of the number of points on $\ell_1$ times the number of points on $\ell_2$, or
$$k \times |A|^2 k^{-2} \times k \lesssim |A|^2.$$

It now suffices to bound the contribution from rectangles where the line through none of the adjacent points $(x_i,x_{i+1})$ are in $L$. Thus we may assume that each line through a side of the rectangle intersects $A$ in at most $k$ points. We can upper bound the number of such corners by selecting any two points in $A$ for $x_0$ and $x_1$ in the relation $(x_1-x_0)\cdot(x_2-x_1)=0 $, and noting that the possible $x_2$ must lie on a line $\ell \notin L$, and thus can contain at most $k$ points. Thus the number of such corners is at most $|A|^2 k$, which completes the proof.
\end{proof}

We now combine the prior two estimates:

\begin{proposition}\label{prop:TboundGF} Let $A, B \subseteq F^2$ be $k$ regular sets (or both irregular sets) with $|A|\sim |B| \sim m$. Then
$$\min \{ |F| \left(\mathcal{R}(A)\mathcal{R}(B)\right)^{1/2}, \mathcal{T}(A,B) \} \lesssim m^{8/3}|F|^{2/3} +m^{7/2}.$$
\end{proposition}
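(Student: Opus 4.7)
The plan is to combine the two lemmas of this section in a simple case analysis. Write $m = |A| \sim |B|$. In the irregular case there is nothing to do: Lemma \ref{lem:Test} already gives $\mathcal{T}(A,B) \lesssim m^{7/2}$, so the $\mathcal{T}$ side of the min is itself bounded by the claimed right-hand side.

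For the $k$-regular case, I would first assemble the two ingredients in a uniform way. From Lemma \ref{lem:Rest}, $\mathcal{R}(A), \mathcal{R}(B) \lesssim m^2 k$, so the incidence side of the min is
\[
|F|\,(\mathcal{R}(A)\mathcal{R}(B))^{1/2} \lesssim |F|\, m^2 k .
\]
Lemma \ref{lem:Test} with $k_1 = k_2 = k$ gives the trapezoid side
\[
\mathcal{T}(A,B) \lesssim m^3 k + m^4 k^{-2}.
\]
So the task reduces to the inequality
\[
\min\bigl\{ |F|\,m^2 k,\; m^3 k + m^4 k^{-2} \bigr\} \lesssim m^{8/3}|F|^{2/3} + m^{7/2}
\]
valid for every $k$ in the allowed range $1 \le k \le m^{1/2}$.

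The natural threshold is $k_0 := m^{2/3}|F|^{-1/3}$, where $|F| m^2 k$ and $m^4 k^{-2}$ balance. When $k \le k_0$, I would keep the incidence side of the min, obtaining $|F|m^2 k \le m^{8/3}|F|^{2/3}$. When $k \ge k_0$, I would switch to the trapezoid side: the term $m^4 k^{-2}$ is then $\le m^{8/3}|F|^{2/3}$, while the term $m^3 k$ is handled purely by the regularity constraint $k \le m^{1/2}$, which gives $m^3 k \le m^{7/2}$. Adding the two resulting estimates recovers the claim.

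The main point is not computational difficulty — once Lemma \ref{lem:Test} and Lemma \ref{lem:Rest} are in hand, this is really just an optimisation in one parameter $k$ — but rather the conceptual step of noticing that the $\mathcal{R}$ estimate and the $\mathcal{T}$ estimate degrade in opposite directions as $k$ varies (the rectangle count grows with $k$ because concentration on lines produces rectangles, while the dominant trapezoid term $m^4 k^{-2}$ shrinks because any trapezoid off the frame must use a line meeting $A$ in $\lesssim k$ points). Quantifying this tradeoff at the single threshold $k_0 = m^{2/3}|F|^{-1/3}$ yields exactly the two terms $m^{8/3}|F|^{2/3}$ and $m^{7/2}$ in the statement, and I do not expect any genuine obstacle beyond presenting the case split cleanly.
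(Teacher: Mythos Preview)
Your proposal is correct and follows essentially the same approach as the paper: the same threshold $k_0 = m^{2/3}|F|^{-1/3}$, the same case split using Lemma~\ref{lem:Rest} for small $k$ and Lemma~\ref{lem:Test} for large $k$ (or irregular sets), and the same bounds $m^3 k \le m^{7/2}$ and $m^4 k^{-2} \le m^{8/3}|F|^{2/3}$ in the latter range. Your write-up is somewhat more explicit than the paper's terse version, but the argument is identical.
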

\begin{proof}Let $k_* = m^{2/3} |F|^{-1/3}$. If $k \leq k_*$ then the result immediately follows from Lemma \ref{lem:Rest}. If $k \geq k_{*}$ (or the sets are irregular) we apply Lemma \ref{lem:Test} which gives
$$\mathcal{T}(A,B) \lesssim \max_{m^{2/3} |F|^{-1/3} \leq k \leq m^{1/2}} \left( m^{3}k + m^4 k^{-2} \right) \lesssim  m^{7/2} + m^{8/3}|F|^{2/3}.$$  
The result follows.
\end{proof}

\section{Proof of Theorem \ref{thm:Main}}
We will say that a function $g: F^3 \rightarrow \mathbb{C}$ is ``regular" if (1) $g \sim 1$ on its support, (2) there is a constant $m$ such that the support of each non-empty vertical slice of $G$, say $G_z$, has size $\sim m$, and (3) either there is a constant $k$ such that each nonempty slice $G_z$ is $k$-regular or each nonempty slices is irregular.

From Lemma \ref{lem:kregPart} and dyadic pigeonholing (see Lemma \ref{lem:dyPig}) an arbitrary function $g: F^3 \rightarrow \mathbb{C}$ can be written as the sum of (scalar multiples of) $\log^{O(1)} |F|$ regular functions with disjoint supports. Thus it suffices to prove the inequality \eqref{eq:dual}, dual to that in Theorem \ref{thm:Main}, for regular functions. We will think of $m$, the size of each nonempty slices of $G$, as fixed throughout the argument. We will also work with the number of nonempty slices, which we will call $w$. Clearly the $m$ and $w$ are related as $m \sim |G| w^{-1}$.

Using \eqref{eq:BiMT}  and \eqref{eq:gzToR} with Lemma \ref{lem:biCont}, we have that
$$ ||\hat{g}||_{L^{2}(P,d\sigma)} \lesssim  ||g||_{L^2(F^3)} + |G|^{\frac{3}{8}} \left(\sum_{z \in F}   || g_z* K ||_{L^4(F^3)}^2 + \sum_{\substack{ z, z' \in F  \\ z\neq z'} } || g_z* K \times g_{z'} *K ||_{L^2(F^3)}  \right)^{1/4}$$
$$\lesssim   |G|^{\frac{1}{2}} + |G|^{\frac{3}{8}} |F|^{-\frac{1}{8}} \left(\sum_{z \in F}  (\mathcal{R}(G_z))^{1/2} + |F|^{-\frac{1}{2}} \sum_{\substack{ z, z' \in F  \\ z\neq z'} } (\mathcal{B}(G_z,G_{z'}))^{1/2} +
 \sum_{\substack{ z, z' \in F  \\ z\neq z'} } |G_z|^\frac{1}{2} |G_{z'}|^\frac{1}{2}  \right)^{1/4}.  $$  
Using Proposition \ref{prop:TboundGF} (and, say, \eqref{eq:Rbound} to bound the first term) we may bound the above by:

$$ |G|^{\frac{1}{2}} + |G|^{\frac{3}{8}}|F|^{-\frac{1}{8}} \left( \sum_{z \in F} |G_z|^{\frac{5}{4}} + |F|^{-\frac{1}{2}} w^2 m^{\frac{7}{4}} + |F|^{-\frac{1}{2}} w^2 |F|^{\frac{1}{3}} m^{\frac{4}{3}}  + |F| |G|   \right)^{1/4}.     $$
Using $m=|G|w^{-1}$

$$ |G|^{\frac{1}{2}} + |G|^{\frac{3}{8}}|F|^{-\frac{1}{8}} \left( \sum_{z \in F} |G_z|^{\frac{5}{4}} + |F|^{-\frac{1}{2}} w^2 w^{-\frac{7}{4}}|G|^{\frac{7}{4}} + |F|^{-\frac{1}{2}} w^2 |F|^{\frac{1}{3}} w^{-\frac{4}{3}} |G|^{\frac{4}{3}}   + |F| |G|  \right)^{1/4}     $$

$$ \lesssim |G|^{\frac{1}{2}} + |G|^{\frac{3}{8}}|F|^{-\frac{1}{8}} \left( \sum_{z \in F} |G_z|^{\frac{5}{4}} + |F|^{-\frac{1}{2}} w^{\frac{1}{4}} |G|^{\frac{7}{4}} + |F|^{-\frac{1}{6}} w^{\frac{2}{3}} |G|^{\frac{4}{3}}  + |F| |G|   \right)^{1/4}     $$

using that $w \leq |F|$ we have that this is 

$$ \lesssim |G|^{\frac{1}{2}} + |G|^{\frac{3}{8}}|F|^{-\frac{1}{8}} \left( |G|^{\frac{5}{16}} + |F|^{-\frac{1}{16}}  |G|^{\frac{7}{16}} + |F|^{\frac{1}{8}}  |G|^{\frac{1}{3}}  + |F|^{\frac{1}{4}} |G|^{\frac{1}{4}}   \right)     $$

$$\lesssim  |G|^{\frac{1}{2}} +  |G|^{\frac{11}{16}} |F|^{-\frac{1}{8}} + |F|^{-\frac{3}{16}}  |G|^{\frac{13}{16}} +  |G|^{\frac{17}{24}}    +|F|^{\frac{1}{8}} |G|^{\frac{5}{8}}.        $$

Combining this estimate with the earlier estimates in \eqref{eq:UcaseEst}, and keeping the largest remaining term in each range, we obtain the following corrected table.

\begin{lemma}\label{lem:correctedFinal}Let $F$ be a finite field of odd order in which $-1$ is not a square. Let $g: F^3 \rightarrow \mathbb{C}$ be a regular function such that $g \sim 1$ on its support $G \subseteq F^3$. Then one has the following estimates:

\begin{equation}\label{eq:UcaseEstN}|| \hat{g}||_{L^{2}(P,d\sigma)} \lesssim |G|^{\frac{1}{2}}+ \left\{\begin{array}{ll} |G| |F|^{-\frac{1}{2}}\quad &\mbox{if } |G| \leq |F|^{\frac{12}{7}}\\
|G|^{\frac{17}{24}} &\mbox{if } |F|^{\frac{12}{7}} \leq |G| \leq |F|^{\frac{9}{5}}\\
 |G|^{\frac{13}{16}} |F|^{-\frac{3}{16}}  &\mbox{if } |F|^{\frac{9}{5}} \leq  |G| \leq |F|^{2} \\
|G|^{\frac{5}{8}} |F|^{\frac{3}{16}} &\mbox{if } |F|^{2}\leq  |G| \leq |F|^{\frac{5}{2}} \\

|G|^{\frac{1}{2}} |F|^{\frac{1}{2}} &\mbox{if } |F|^{\frac{5}{2}} \leq  |G| \leq |F|^{3}. \end{array}\right. 
\end{equation}
\end{lemma}

Indeed, the only point that requires care is the middle range: the term $|F|^{-\frac{3}{16}}|G|^{\frac{13}{16}}$ is dominated by $|G|^{\frac{17}{24}}$ only when $|G|\leq |F|^{\frac95}$, and remains the dominant term up to $|G|\leq |F|^2$. One checks from \eqref{eq:UcaseEstN} that every term is bounded by $|G|^{\frac{23}{32}}$. Thus
 $$|| \hat{g}||_{L^{2}(P,d\sigma)} \lesssim |G|^{\frac{23}{32}} \lesssim ||g||_{L^{\frac{32}{23}-\epsilon}(F^3)} $$
for all $\epsilon >0$ and all characteristic-type regular functions $g$. By Lemma \ref{lem:dyPig} and Lemma \ref{lem:epsRem}, this implies the dual estimate for every exponent $p<\frac{32}{23}$. Dualizing, this is Theorem \ref{thm:Main}.

\section{Additional Remarks}
We conclude with a few additional remarks:
\begin{itemize}

\item We expect that there is a higher dimensional analog of this argument, but we do not explore that here.

\item The correction above changes the role of the $m^{7/2}$ term in Proposition \ref{prop:TboundGF}. In the previous version, this term was not treated as the dominant term in the final range, and we remarked that improving it would not affect the main theorem. The corrected bookkeeping shows that this term is in fact the obstruction in the middle range. Thus the observation that this term can be improved in certain parameter ranges, for example by bringing in stronger incidence estimates in prime order fields, is directly relevant. These refinements, and the corresponding improved restriction estimates, will appear in forthcoming work.

\item It seems that a leading term of size near $m^3$ (for certain parameter ranges) would be the best one could hope for in Proposition \ref{prop:TboundGF}, and that such an estimate would not yield an exponent better than $r \geq 10/3$ in the main theorem via this argument. This is the same range that would follow from an optimal Szemer\'edi-Trotter-type estimate using the prior machinery, as observed by Rudnev and Shkredov \cite{RS}.

\end{itemize}

\section{Appendix: The $L^4$ estimate}
The purpose of this appendix is to give a short proof of \eqref{eq:gzToR} for the sake of self-containedness. This is not new and is used implicitly in much of the prior work, however it is not stated precisely in this form in a way we can directly refer the reader to.

\begin{lemma}\label{lem:Rrel} Let $G:F^3 \rightarrow \mathbb{C}$ be a function supported on a vertical hyperplane. In other words let $G(\underline{x},t):=\delta(t-z)g(\underline{x})$ for some function $g:F^2 \rightarrow \mathbb{C}$. Moreover assume that $g \sim 1$ on its support $A$. In the notation above one has 
$$|| G * K||_{L^4(F^3)}^{4} \lesssim |F|^{-1}\mathcal{R}(A).$$
\end{lemma}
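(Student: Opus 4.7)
The plan is to compute $\|G*K\|_{L^4}^4$ directly by unfolding the convolution, expanding the fourth power, and evaluating the resulting character sums. Since $G$ is supported on $\{x_3=z\}$ and $K$ vanishes on $\{x_3=0\}$, the convolution is zero whenever $x_3=z$; for $t := x_3 - z \neq 0$, formula \eqref{eq:evaulation} gives
$$(G*K)(\underline{x},x_3) \;=\; \frac{\sigma_F}{|F|}\sum_{\underline{y}\in A} e\!\left(\frac{(\underline{x}-\underline{y})\cdot(\underline{x}-\underline{y})}{-4t}\right).$$
Raising to the fourth power, summing over $\underline{x}\in F^2$ and $t\in F^*$, and using $|\sigma_F|=1$ produces a quadruple sum over $(\underline{y}_1,\ldots,\underline{y}_4)\in A^4$ weighted by a character whose phase, as a function of $\underline{x}$, is linear because the $\underline{x}\cdot\underline{x}$ contributions telescope out.

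I would then carry out the inner sums in the order $\underline{x}$ then $t$. The linear $\underline{x}$-phase has coefficient proportional to $\underline{y}_1-\underline{y}_2+\underline{y}_3-\underline{y}_4$, so the $\underline{x}$-sum enforces the parallelogram relation $\underline{y}_1+\underline{y}_3=\underline{y}_2+\underline{y}_4$ (contributing $|F|^2$) and vanishes otherwise. On the parallelogram set, writing $\underline{y}_i\cdot\underline{y}_i-\underline{y}_j\cdot\underline{y}_j=(\underline{y}_i+\underline{y}_j)\cdot(\underline{y}_i-\underline{y}_j)$ and using the identity $\underline{y}_1-\underline{y}_4=\underline{y}_2-\underline{y}_3$ collapses the surviving $\underline{x}$-independent numerator to $2(\underline{y}_1-\underline{y}_2)\cdot(\underline{y}_1-\underline{y}_4)$. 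The remaining $t$-sum is then of the form $\sum_{t\in F^*} e(c/t)$; this evaluates to $|F|-1$ when $c=0$ and to $-1$ otherwise, via the change of variable $s=c/t$ and the identity $\sum_{s\in F^*}e(s)=-1$.

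The payoff is that $c=0$ combined with the parallelogram relation is exactly the rectangle condition from Section 2: substituting $\underline{y}_1-\underline{y}_4=\underline{y}_2-\underline{y}_3$ rewrites the orthogonality as $(\underline{y}_2-\underline{y}_1)\cdot(\underline{y}_3-\underline{y}_2)=0$, a corner at $\underline{y}_2$ in the triple $(\underline{y}_1,\underline{y}_2,\underline{y}_3)$; and because $F$ has odd characteristic, a parallelogram with one right angle is a rectangle (the perpendicularity propagates cyclically via the equal-opposite-sides identity), so these quadruples are exactly the rectangles enumerated by $\mathcal{R}(A)$. Combining the counts yields the clean identity
$$\|G*K\|_{L^4}^4 \;=\; |F|^{-1}\mathcal{R}(A) \;-\; |F|^{-2}E(A),$$
where $E(A)$ is the additive energy of $A$ (the total parallelogram count). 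The leading term is the claimed $|F|^{-1}\mathcal{R}(A)$.

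The only delicate point is justifying the two-sided $\sim$ in the statement. The identity above immediately yields the upper bound $\|G*K\|_{L^4}^4\leq |F|^{-1}\mathcal{R}(A)$, which is all the applications of \eqref{eq:gzToR} in the body of the paper actually require; moreover, non-negativity of the left-hand side forces $E(A)\leq |F|\mathcal{R}(A)$, so the correction never exceeds the main term. To upgrade to a matching lower bound one argues that $|F|^{-1}\mathcal{R}(A)$ strictly dominates $|F|^{-2}E(A)$, e.g.\ by isolating the contribution of degenerate rectangles (those with $\underline{y}_1=\underline{y}_2$ or $\underline{y}_1=\underline{y}_4$) inside $\mathcal{R}(A)$. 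I expect this bookkeeping, rather than the character-sum computation itself, to be the only real obstacle.
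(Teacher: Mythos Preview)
Your proposal is correct and follows essentially the same approach as the paper: both expand the $L^4$ norm of $G*K$ and evaluate the resulting character sums to identify the count with rectangles in $A$. The only cosmetic difference is that the paper first applies the pseudo-conformal change of variables $t=1/(-4x_3)$, $u=\underline{x}/(2x_3)$ (so that the expansion is immediately recognized as the additive energy of $A$ lifted to the paraboloid), whereas you expand directly and observe the $\underline{x}\cdot\underline{x}$ cancellation by hand; you are in fact more careful than the paper about the $-|F|^{-2}E(A)$ correction coming from the $t$-sum ranging over $F^*$ rather than $F$, which the paper silently absorbs into the $\sim$.
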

\begin{proof}
By translation we may assume $z=0$. Then
$$ \sum_{x \in F^3}|{G} * K(x)|^4 =  |F|^{-4}  \sum_{\underline{x} \in F^{2}}\sum_{\substack {x_3 \in F \\ x_3 \neq 0}} \left|\sum_{y \in F^{2}} g(y) e\left( (\underline{x}-y)\cdot(\underline{x}-y)/(-4x_3) \right)\right|^4.$$  
Noting that
$$(\underline{x}-y)\cdot(\underline{x}-y)/(-4x_3) = (\underline{x}\cdot \underline{x} -2\underline{x}\cdot y + y\cdot y)/(-4x_3)$$
we performing the change of variables $t = 1/(-4 x_3)$ and $u = \underline{x}/(2x_3)$, so that
$$(\underline{x}-y)\cdot(\underline{x}-y)/(-4x_3) = u\cdot u/(4t) +u\cdot y +t y\cdot y.$$
We note that this change of variables is sometimes called the pseudo-conformal transformation in related literature. Using this, the left-hand side above is equal to
$$ |F|^{-4}  \sum_{u \in F^{2}}\sum_{\substack {t \in F \\ t \neq 0}} \left|e\left(\frac{u \cdot u}{4t} \right) \sum_{y \in F^{2}} g(y) e\left( u \cdot y + t y\cdot y  \right)\right|^4$$  
$$ =|F|^{-4}  \sum_{u \in F^{2}}\sum_{\substack {t \in F \\ t \neq 0}} \left| \sum_{y \in F^{2}} g(y) e\left( u \cdot y + t y\cdot y  \right)\right|^4.$$
Expanding the $L^4$ norm and adding the omitted $t=0$ term only increases the expression up to an acceptable absolute constant. We obtain the upper bound
$$\lesssim |F|^{-1} |\{ x,y,z,w \in A : x+y=z+w \text{ and } x\cdot x + y\cdot y = z\cdot z + w \cdot w \}|. $$
The reader might recognize this as the ``additive energy" of a subset of the paraboloid. Solving for $x$ in the first relation (that is $x=z+w-y$), combined with the second relation, states
$$(z+w-y)\cdot (z+w-y) = z\cdot z + w \cdot w- y\cdot y$$
which holds if and only if 
$$(z - y)\cdot(y-w) =0$$
which, in the notation above, is equivalent to $(z,y,w)$ forming a corner. Repeating the argument, but solving for $y,z$ and $w$, respectively, establishes that these quadruples are bounded by the number of rectangles in $A$.
\end{proof}

 \bibliographystyle{amsplain}

\end{document}